\newtheorem{theorem}{Theorem}[section]
\newtheorem*{theorem*}{Theorem}
\newtheorem{remark}[theorem]{Remark}
\newtheorem{lemma}[theorem]{Lemma}
\newtheorem{definition}[theorem]{Definition}
\newtheorem{proposition}[theorem]{Proposition}
\newtheorem*{proposition*}{Proposition}
\newcommand{\R}{\mathbb{R}}
\newcommand{\be}{\begin{eqnarray*}}
\newcommand{\ee}{\end{eqnarray*}}
\newcommand{\ba}{\begin{align*}}
\newcommand{\bpm}{\begin{pmatrix}}
\newcommand{\epm}{\end{pmatrix}}
\newcommand{\bx}{\boldsymbol{x}}
\begin{document}
%\linenumbers
\title{Almansi-type decomposition and Fueter-Sce theorem for generalized partial-slice regular functions}
\author
{Qinghai Huo$^1$\thanks{This work was partially supported by the National Natural Science Foundation of China (No. 12301097).}
\ Pan Lian$^2$\thanks{This work was partially supported by the National Natural Science Foundation of China (No. 12101451).}
\ Jiajia Si$^3$\thanks{This work was partially supported by the National Natural Science Foundation of China (No. 12201159) and the Hainan Provincial Natural Science Foundation of China (No. 124YXQN413).}
\ Zhenghua Xu$^1$\thanks{This work was partially supported by  the Anhui Provincial Natural Science Foundation (No. 2308085MA04).} \\
\emph{ \small $^1$School of Mathematics, Hefei University of Technology,}\\
\emph{\small  Hefei, 230601, P.R. China}
\\
\emph{\small E-mail:  hqh86@mail.ustc.edu.cn;  zhxu@hfut.edu.cn }\\
\emph{\small $^2$School of Mathematical Sciences, Tianjin Normal University, }\\
\emph{\small  Tianjin 300387,  P.R. China}
\\
\emph{\small E-mail:  panlian@tjnu.edu.cn }\\
\emph{\small $^3$School of Mathematics and Statistics, Hainan University, }\\
\emph{\small Haikou 570228,  P.R. China} \\
\emph{\small E-mail:   sijiajia@mail.ustc.edu.cn  }
}

\maketitle

\begin{abstract}
Very  recently, the concept of generalized partial-slice monogenic (or regular) functions has been introduced to unify  the theory of monogenic  functions and of slice monogenic functions over Clifford algebras.  Inspired by the work
of A. Perotti,    in this paper we provide two analogous versions of the Almansi decomposition in this new setting.  Additionally,  two enhancements of the Fueter-Sce theorem have been obtained for generalized partial-slice regular functions.

\end{abstract}
{\bf Keywords:}\quad Functions of a hypercomplex variable;  monogenic functions; slice monogenic functions; polyharmonic
 functions\\
{\bf MSC (2020):}\quad  Primary: 30G35;  Secondary: 31B30
%%%%%%%%%%%%% Introduction %%%%%%%%%%%%%%
\section{Introduction}
 In 1898, Emilio Almansi stated  the following decomposition theorem for polyharmonic  functions in terms of harmonic functions (see e.g. \cite{Almansi,Aronszajn}).
\begin{theorem}{\bf(Almansi decomposition)}\label{Almansi-decomposition-harmonic}
 Let $\Omega$ be  a star-like domain in $\mathbb{R}^{n+1}$ with centre $0$ and $u$ be  polyharmonic
 of degree  $N$ in  $\Omega$, that is $\Delta^{N}u(x)=0$, where $\Delta=\sum _{i=0}^{n} \frac{\partial^{2} }{\partial x_{i}^{2}}$ is the Laplacian in   $\mathbb{R}^{n+1}$. Then  there exist unique  harmonic functions  $u_{0},u_{1}, \ldots,u_{N-1}$ in $\Omega$ such that
  $$u(x)=u_{0}(x)+|x|^{2}u_{1}(x)+ \cdots+|x|^{2N-2}u_{N-1}(x), \quad x\in \Omega.$$
\end{theorem}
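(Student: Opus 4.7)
The plan is to proceed by induction on the polyharmonic degree $N$. The base case $N=1$ is immediate, since $u$ is itself harmonic and $u_{0}=u$ is the only possible decomposition. For the inductive step, assume the theorem for degrees up to $N-1$, and let $u$ satisfy $\Delta^{N}u=0$. The strategy will be to split off a harmonic summand, producing harmonic $u_{0}$ and polyharmonic $v$ of degree $N-1$ with $u = u_{0} + |x|^{2} v$; the inductive hypothesis applied to $v$ then yields the full Almansi expansion. Equivalently, setting $f := \Delta u$ (which is polyharmonic of degree $N-1$), one looks for $v$ polyharmonic of degree $N-1$ solving $\Delta(|x|^{2} v) = f$, so that $u_{0} := u - |x|^{2} v$ is harmonic by construction.

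To produce such a $v$, I would exploit star-likeness of $\Omega$ by integrating along rays from the origin, seeking
\[
v(x) \;=\; \int_{0}^{1} K(t)\, f(tx)\, dt
\]
for an explicit kernel $K$. Using the commutator identity $\Delta(|x|^{2}\,\cdot\,) = |x|^{2}\Delta + 2(n+1) + 4E$, where $E = \sum_{i}x_{i}\partial_{x_{i}}$ is the Euler operator, the equation $\Delta(|x|^{2}v) = f$ reduces to a first-order ODE in $t$ for $K$, whose solution is expected to be a power of $t$ times a power of $(1-t)$. The hard part will be verifying two things at once: that this integral genuinely solves $\Delta(|x|^{2}v)=f$ pointwise on $\Omega$, and that the resulting $v$ is itself polyharmonic of degree exactly $N-1$, so that the inductive hypothesis applies. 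The latter is a relatively cheap observation, since commuting $\Delta^{N-1}$ past the integral gives $\Delta^{N-1} v(x) = \int_{0}^{1} K(t)\, t^{2(N-1)} (\Delta^{N-1}f)(tx)\, dt$, which vanishes by polyharmonicity of $f$.

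For uniqueness, suppose $\sum_{k=0}^{N-1} |x|^{2k} u_{k}\equiv 0$ on $\Omega$ with each $u_{k}$ harmonic. Expanding each $u_{k}$ as a convergent Taylor series of solid spherical harmonics about the origin and invoking the classical Fischer decomposition of homogeneous polynomials in $\mathbb{R}^{n+1}$ (every homogeneous polynomial decomposes uniquely as a sum $\sum_{j}|x|^{2j} h_{j}$ with $h_{j}$ harmonic homogeneous), every term of the resulting double sum must vanish separately, forcing $u_{k}\equiv 0$ on a neighborhood of $0$; real-analyticity of harmonic functions on the connected open set $\Omega$ then propagates $u_{k}\equiv 0$ to all of $\Omega$.
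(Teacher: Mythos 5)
This is the classical Almansi theorem, which the paper quotes from \cite{Almansi,Aronszajn} without proof, so your attempt must stand on its own. Your overall architecture (induction on $N$, ray integrals exploiting star-likeness, Fischer decomposition plus real-analytic continuation for uniqueness) is the standard one, and the uniqueness paragraph is correct and complete. The existence step, however, has a genuine gap precisely at the point you defer as ``the hard part'': the single-kernel ansatz $v(x)=\int_0^1K(t)f(tx)\,dt$ cannot solve $\Delta(|x|^2v)=f$ for a general polyharmonic $f$ of degree $N-1\ge 2$. Your commutator identity gives $\Delta(|x|^2v)=|x|^2\Delta v+4Ev+2(n+1)v$; the last two terms do reduce, after integration by parts in $t$, to a first-order ODE for $K$, but the first term is $|x|^2\Delta v(x)=\int_0^1K(t)\,t^2|x|^2(\Delta f)(tx)\,dt=\int_0^1K(t)\,\bigl(|y|^2\Delta f\bigr)(tx)\,dt$, a ray integral of the \emph{different} function $|y|^2\Delta f(y)$. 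It vanishes only when $f$ is harmonic, i.e.\ only when $N=2$, and cannot be absorbed into the ODE for $K$. Concretely, no kernel exists: writing $m=n+1$ and testing on $f=h_d$ (harmonic homogeneous of degree $d$) forces $\int_0^1K(t)t^d\,dt=\tfrac{1}{2(2d+m)}$ for all $d$, while testing on the biharmonic $f=|y|^2h_d$ forces $\int_0^1K(t)t^{d+2}\,dt=\tfrac{1}{4(2d+m+2)}$; the first condition already determines $\int_0^1K(t)t^{d+2}\,dt=\tfrac{1}{2(2d+m+4)}$, which contradicts the second for every $m\ge 1$. So the inductive step as designed does not produce the required $v$.

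The standard repair reverses your order of operations: apply the inductive hypothesis to $f=\Delta u$ \emph{first}, writing $f=\sum_{k=0}^{N-2}|x|^{2k}f_k$ with each $f_k$ harmonic, and only then invert $\Delta$ term by term. For harmonic $h$ one has $\Delta(|x|^{2k}h)=|x|^{2k-2}\bigl(4kE+2k(2k+n-1)\bigr)h$, and the first-order operator $L_k:=4kE+2k(2k+n-1)$ preserves harmonicity and is inverted on a star-like domain by exactly the ray integral you propose, with $k$-dependent kernel $K_k(t)=\tfrac{1}{4k}\,t^{(2k+n-3)/2}$ (a pure power of $t$; no factor of $(1-t)$ appears). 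Setting $u_k:=L_k^{-1}f_{k-1}$ for $k=1,\dots,N-1$ yields $\Delta\bigl(\sum_{k\ge 1}|x|^{2k}u_k\bigr)=f=\Delta u$, so $u_0:=u-\sum_{k\ge1}|x|^{2k}u_k$ is harmonic and the induction closes. With this reordering your argument becomes a correct proof; as written, it is not.
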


In  the classical Clifford analysis, we always study  the so-called monogenic functions, which are nullsolutions of the generalized
Cauchy-Riemann systems such as  Dirac operator and Weyl operator.  It is well-known that monogenic functions belong to   harmonic functions with values in Clifford algebras. In 2002, Malonek and Ren first established an Almansi-type theorem in Clifford analysis as follows \cite{Malonek-Ren}.
\begin{theorem} \label{Almansi-decomposition-monogenic}
 Let $\underline{\Omega}$ be  a star-like domain in $\mathbb{R}^{n}$ with centre $0$ and $u$ be left polymonogenic
 of degree $N$ in  $\Omega$, that is $\underline{D}^{N}u(\underline{x})=0$, where the variable $\underline{x}$ is identified as the $1$-vector $\sum _{i=1}^{n}e_i   x_{i}$ in the Clifford algebra $\mathbb{R}_n$ and $\underline{D}=\sum _{i=1}^{n}e_i \frac{\partial }{\partial x_{i}}$ is the classical Dirac operator in $\mathbb{R}^{n}$. Then  there exist unique left monogenic functions  $u_{0},u_{1}, \ldots,u_{N-1}$ in $\underline{\Omega}$ such that
  $$u(\underline{x})=u_{0}(\underline{x})+\underline{x}u_{1}(\underline{x})+ \cdots+\underline{x}^{N-1}u_{N-1}(\underline{x}), \quad \underline{x}\in \underline{\Omega}.$$
\end{theorem}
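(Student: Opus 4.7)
My plan is to proceed by induction on $N$. The base case $N=1$ is immediate, since polymonogenic of degree $1$ just means left monogenic. For the inductive step, the idea is to peel off a single term $\underline{x}^{N-1}u_{N-1}$ with $u_{N-1}$ monogenic, so that $u - \underline{x}^{N-1}u_{N-1}$ is polymonogenic of degree $N-1$; the inductive hypothesis then supplies the remaining components.

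The central tool I would establish first is the operator identity
$$
\underline{D}^{k}\bigl(\underline{x}^{k}\,g\bigr) \;=\; P_k(E)\,g
$$
valid for every left monogenic $g$, where $E = \sum_{i=1}^{n} x_i\partial_{x_i}$ is the Euler operator and $P_k(t)$ is an explicit polynomial whose roots lie in $\{-n/2,\,-(n+2)/2,\,\ldots\}$ and are therefore strictly negative. The identity follows by induction on $k$ from the fundamental relations $\underline{D}\,\underline{x} + \underline{x}\,\underline{D} = -(n + 2E)$, $[E,\underline{x}] = \underline{x}$, and $[E,\underline{D}] = -\underline{D}$; the last of these also shows that $E$ preserves left monogenicity, so $P_k(E)g$ is monogenic whenever $g$ is. Because monogenic functions on the star-like domain $\underline{\Omega}$ admit a unique expansion in homogeneous monogenic polynomials on which $E$ acts by a nonnegative integer, $P_k(E)$ is a bijection of the space of left monogenic functions on $\underline{\Omega}$ for every $k$. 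With this in hand, setting $v := \underline{D}^{N-1}u$ (which is monogenic) and $u_{N-1} := P_{N-1}(E)^{-1}v$ gives $\underline{D}^{N-1}(u - \underline{x}^{N-1}u_{N-1}) = 0$ by construction, and the induction hypothesis delivers $u_0,\dots,u_{N-2}$.

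Uniqueness comes from the same identity: if $\sum_{k=0}^{N-1}\underline{x}^{k}u_k \equiv 0$ with each $u_k$ monogenic, then $\underline{D}^{k+1}(\underline{x}^{k}u_k) = \underline{D}\bigl(P_k(E)u_k\bigr) = 0$ for each $k$, so applying $\underline{D}^{N-1}$ to the relation kills every term except the last and forces $P_{N-1}(E)u_{N-1} = 0$, whence $u_{N-1} = 0$; a descending induction then yields $u_k = 0$ for every $k$. The main obstacle I expect is the combinatorial bookkeeping needed to establish the operator identity and pin down the root set of $P_k$, together with the careful use of the star-like hypothesis: it is precisely this geometric assumption that permits the inversion of $P_k(E)$ globally on $\underline{\Omega}$, either via the homogeneous expansion or, equivalently, by radial integration from the origin.
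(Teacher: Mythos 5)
The paper does not actually prove Theorem \ref{Almansi-decomposition-monogenic}; it is quoted from Malonek and Ren \cite{Malonek-Ren}, and your argument is essentially the proof given there: the identity $\underline{D}^{k}(\underline{x}^{k}g)=P_k(E)g$ for monogenic $g$, with the factors of $P_k(E)$ of the form $-2l$ and $-(n+2l+2E)$ inverted on a star-like domain by the radial integrals $g\mapsto\int_0^1 t^{\alpha-1}g(tx)\,dt$, $\alpha=(n+2l)/2>0$. Your reasoning is correct; the only point to make explicit is that it is this radial-integration realization of $P_k(E)^{-1}$ (rather than the global homogeneous expansion, which need not converge on all of $\underline{\Omega}$) that justifies the inversion on a general star-like domain.
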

From the facts that
$$\underline{D}^{2}=- \sum _{i=1}^{n} \frac{\partial^{2} }{\partial x_{i}^{2}}, \ \ \underline{x}^{2}=-|\underline{x}|^{2} =- \sum _{i=1}^{n} | x_{i}|^{2},$$
Theorem  \ref{Almansi-decomposition-monogenic} can be viewed as an refinement of Theorem \ref{Almansi-decomposition-harmonic}.

 Thereafter,   the  Almansi  decomposition has been considered   for different operators, such as \cite{Ren-07}  for   the iterates of weighted Laplace and
 Helmholtz operators,    \cite{Ren-05} for Dunkl operators. Being far from completeness, we refer to \cite{Dinh-22,Ren-06,Ren-Zhang,Yuan} for  more generalizations of  the  Almansi  decomposition. The interested readers can also look up a modified Almansi
 decomposition  for   polyharmonic functions, called the cellular decomposition; see \cite[Theorem 3.4]{Borichev-Hedenmalm} and \cite[Theorem 1.3]{Liu}.

 In the classical Clifford analysis, it is well known that the powers of the
paravector variable fail to be  monogenic.  In order to study the function class including  the powers of paravectors variables, the notions of slice regular  \cite{Gentili-Struppa-07} and slice monogenic  functions \cite{ Colombo-Sabadini-Struppa-09} appear. In the framework of slice analysis, the Almansi-type   theorem  has been considered  \cite{Binosi,Perotti-20} for quaternions and \cite{Perotti-21} for Clifford algebras.

To state Perotti's result from \cite{Perotti-21}, let us  recall the concept of  zonal harmonic functions (cf. \cite{Axler}). Denote by $\mathbb{S}^{n}$ the unit sphere and by $O(n+1)$ the set of orthogonal transformations of $\mathbb{R}^{n+1}$. Given $\eta\in \mathbb{S}^{n}$, consider the harmonic function $Z(\cdot, \eta)$ in $\mathbb{R}^{n+1}$, which is called   the zonal harmonic function with pole $\eta$ if
\begin{equation}\label{zonal}
Z(T(\cdot), \eta)=Z(\cdot, \eta)
\end{equation}
for all $ T\in O(n+1)$ with $T\eta=\eta$.

Now  we can present the Almansi-type decomposition from  Perotti \cite{Perotti-21}  for  slice regular functions.
 \begin{theorem} \label{Almansi-theorem-Perotti-1}
 Let $n=2m+1\geq3$ be odd    and $f$ be slice regular in an axial symmetric  domain $\Omega$ in $\mathbb{R}^{n+1}$.
Then there exist two unique  $\mathbb{R}_{n}$-valued zonal polyharmonic functions  of degree $m$ with pole $1$ in $\Omega$ such that
 $$f(x)=A(x)- \overline{x}B(x).$$
Conversely, if $A$ and $ B$  are $\mathbb{R}_{n}$-valued functions of $C^{1}$ class in $\Omega$, axially symmetric with respect to the real axis, then $g(x)=A( x)- \overline{ x}B( x)$ is a slice function in $\Omega$. The function $g$ is  slice regular   if and only if $A$ and $B$ satisfy the system of equations
 \begin{eqnarray*}
 \left\{
\begin{array}{ll}
 \partial_{x_0} A -  x_0\partial_{x_0}B -r\partial_{r}B =2B,
\\
 \partial_{r}A  -   x_0  B +r \partial_{r}\partial_{x_0} B=0,
\end{array}
\right.
\end{eqnarray*}
where $x=x_0+\underline{x}$, $r=|\underline{x}|$, and $\overline{x}=x_0-\underline{x}$ is the  Clifford conjugation of $x$.
\end{theorem}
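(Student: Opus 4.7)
The plan is to exploit the stem representation of slice regular functions on axially symmetric sets. Any such $f$ admits the expression $f(x)=\alpha(x_0,r)+\underline{\omega}\,\beta(x_0,r)$, where $\underline{\omega}=\underline{x}/r$, $r=|\underline{x}|$, and the pair $(\alpha,\beta)$ of $\mathbb{R}_n$-valued functions on the upper half plane satisfies the Cauchy--Riemann system $\partial_{x_0}\alpha=\partial_r\beta$, $\partial_r\alpha=-\partial_{x_0}\beta$, together with the compatibility condition $\beta(x_0,0)=0$. Since $A-\overline{x}B=(A-x_0B)+\underline{x}B=(A-x_0B)+r\underline{\omega}B$, one is naturally led to set
\begin{equation*}
B(x):=\frac{\beta(x_0,r)}{r},\qquad A(x):=\alpha(x_0,r)+x_0\,B(x),
\end{equation*}
so that $f=A-\overline{x}B$ by construction. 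Both $A$ and $B$ depend only on $(x_0,r)$, hence they are $O(n)$-invariant with respect to the real axis; equivalently, they are zonal with pole at $1\in\mathbb{S}^{n}$.

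The heart of the proof is to verify that $A$ and $B$ are polyharmonic of degree $m$ in $\Omega$. An axially symmetric function $u=u(x_0,r)$ on $\mathbb{R}^{n+1}$ has Laplacian $\Delta u=\partial_{x_0}^{2}u+\partial_{r}^{2}u+\tfrac{n}{r}\partial_{r}u$. Using the classical intertwining identity $\Delta(r^{-1}\phi)=r^{-1}\widetilde{\Delta}\phi$, where $\widetilde{\Delta}$ is the axial Laplacian in two lower dimensions, the action of $\Delta$ on $B=\beta/r$ can be rewritten as the action of a shifted second-order operator on $\beta$. Combined with the Cauchy--Riemann system for $(\alpha,\beta)$, each application of $\Delta$ translates into a two-step reduction on the pair $(\alpha,\beta)$; with $n=2m+1$ the ladder closes after exactly $m$ iterations, forcing $\Delta^{m}A=\Delta^{m}B=0$. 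This is the Fueter--Sce-type mechanism underlying Perotti's result. Uniqueness follows from the slice structure: evaluating $A-\overline{x}B$ on antipodal pairs $x_0\pm r\underline{\omega}$ and taking half-sum and half-difference isolates $A$ and $\underline{x}B$ separately, so both are determined by $f$.

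For the converse, given $A,B$ axially symmetric of class $C^{1}$, set $\alpha:=A-x_0B$ and $\beta:=rB$; these are axially symmetric with $\beta(x_0,0)=0$, so $g=\alpha+\underline{\omega}\beta$ is a bona fide slice function on $\Omega$. Slice regularity of $g$ is equivalent to the Cauchy--Riemann system for the pair $(\alpha,\beta)$. Substituting the expressions for $\alpha$ and $\beta$ in terms of $A$ and $B$ and simplifying reduces the first equation $\partial_{x_0}\alpha=\partial_r\beta$ to $\partial_{x_0}A-x_0\partial_{x_0}B-r\partial_rB=2B$, and the second equation $\partial_r\alpha=-\partial_{x_0}\beta$ to the displayed companion equation; this is purely algebraic manipulation.

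The main obstacle is the polyharmonicity step: transferring the first-order Cauchy--Riemann system for $(\alpha,\beta)$ into the vanishing of $\Delta^{m}A$ and $\Delta^{m}B$ requires careful interplay between the axial Laplacian and the dimensional ladder provided by the factor $r$. It is exactly here that the parity hypothesis $n=2m+1$ is essential, since only in odd paravector dimension does the ladder close at the critical degree $m$. All remaining parts of the argument---the stem reformulation, the uniqueness, and the derivation of the PDE system in the converse---are then routine bookkeeping.
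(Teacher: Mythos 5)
Your overall strategy coincides with the paper's (which proves this statement as the special case $(p,q)=(0,n)$ of its Theorem \ref{Almansi-theorem-Main}): write $f=\alpha+\underline{\omega}\beta$, set $B=\beta/r=f_s^{\prime}$ and $A=\alpha+x_0B$, establish polyharmonicity of degree $m$, and get uniqueness and the converse by direct manipulation. The uniqueness argument and the converse are fine; indeed your algebra yields $\partial_rA-x_0\partial_rB+r\partial_{x_0}B=0$ for the second equation, which agrees with the paper's system (\ref{CR-2}) at $p=0$.

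The genuine gap is exactly the step you flag as the heart of the proof: the polyharmonicity of $A$ and $B$ is asserted, not proved. ``Each application of $\Delta$ translates into a two-step reduction \ldots\ the ladder closes after exactly $m$ iterations'' describes what must be shown rather than showing it, and the sketch as written would not survive being made precise, for two reasons. First, your axial Laplacian $\Delta u=\partial_{x_0}^2u+\partial_r^2u+\tfrac{n}{r}\partial_ru$ is off by one (since $\underline{x}\in\mathbb{R}^n$ the correct coefficient is $\tfrac{n-1}{r}$); in an argument whose whole content is that the dimensional count terminates at exactly step $m$ when $n=2m+1$, this is not a harmless slip. Second, and more seriously, $A$ and $B$ do \emph{not} behave symmetrically under the iteration, contrary to what ``forcing $\Delta^mA=\Delta^mB=0$'' suggests. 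Writing $\alpha=G_1(x_0,r^2)$ and $\beta/r=G_2(x_0,r^2)$, the inductive computation (the paper's Lemma \ref{harmonic-Xu-lemma} and Theorem \ref{spherical-derivative-value}) gives $\Delta^k(\beta/r)=2^k(n-3)(n-5)\cdots(n-2k-1)\,\partial^kG_2$, whose last factor vanishes at $k=m$, so $\Delta^mB=0$; but $\Delta^k\alpha=2^k(n-1)(n-3)\cdots(n-2k+1)\,\partial^kG_1$ only vanishes at $k=m+1$, i.e.\ $\alpha$ alone is polyharmonic of degree $m+1$, not $m$. To get $\Delta^mA=0$ for $A=\alpha+x_0B$ one needs the Leibniz-type identity $\Delta^m(x_0h)=2m\Delta^{m-1}\partial_{x_0}h+x_0\Delta^mh$ (the paper's Lemma \ref{poly-harmonic-lemma}) together with the Cauchy--Riemann consequence $\Delta\alpha=(1-n)\partial_{x_0}B=-2m\,\partial_{x_0}B$, so that the two degree-$(m+1)$ obstructions cancel; your proposal contains no trace of this cancellation, which is the essential idea. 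A smaller omission: $\beta(x_0,0)=0$ with $\beta\in C^1$ does not make $B=\beta/r$ smooth across the real axis; one needs the odd real-analytic structure $\beta(x_0,r)=rG_2(x_0,r^2)$ to ensure $A$ and $B$ are genuinely (poly)harmonic on all of $\Omega$.
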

%The definitions of slice regular functions  and axial symmetric  domains shall be   given in Section 2.

From Theorems \ref{Almansi-decomposition-harmonic} and \ref{Almansi-theorem-Perotti-1}, Perotti established  another  decomposition theorem for  slice regular functions  \cite{Perotti-21} .
\begin{theorem} \label{Almansi-theorem-Perotti-2}
 Let $n=2m+1\geq3$ be odd, $\Omega$ be    an axial symmetric star-like domain in $\mathbb{R}^{n+1}$ with centre $0$ and $f$ be  slice regular
 in  $\Omega$. Then  there exist  zonal biharmonic
 functions    $g_{0},g_{1}, \ldots,g_{m-1}$ in $\Omega$, with pole $1$,  such that
  $$f(x)=g_{0}(x)+|x|^{2}g_{1}(x)+ \cdots+|x|^{2m-2}g_{m-1}(x), \quad x\in \Omega,$$
where $g_{0},g_{1}, \ldots,g_{m-1}  \in Ker (D \Delta )$  with  $D=\partial_{x_{0}}+\underline{D}$ and $\Delta$ being the Dirac operator and the Laplacian in   $\mathbb{R}^{n+1}$, respectively.
\end{theorem}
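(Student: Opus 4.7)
The plan is to chain the slice-type Almansi decomposition of Theorem~\ref{Almansi-theorem-Perotti-1} with the classical Almansi decomposition of Theorem~\ref{Almansi-decomposition-harmonic}. First, I apply Theorem~\ref{Almansi-theorem-Perotti-1} to the slice regular function $f$ on the axially symmetric star-like domain $\Omega$, obtaining $\mathbb{R}_n$-valued zonal polyharmonic functions $A, B$ of degree $m$ with pole $1$ such that $f(x) = A(x) - \overline{x}\, B(x)$. Since $\Omega$ is star-like with centre $0$ and $\Delta^m A = \Delta^m B = 0$, the classical Almansi decomposition applies componentwise (in a fixed basis of $\mathbb{R}_n$) to each of $A$ and $B$ and yields unique harmonic $\mathbb{R}_n$-valued functions $a_0,\dots,a_{m-1}$ and $b_0,\dots,b_{m-1}$ with
$$A(x) = \sum_{i=0}^{m-1} |x|^{2i} a_i(x), \qquad B(x) = \sum_{i=0}^{m-1} |x|^{2i} b_i(x).$$
Because $|Tx| = |x|$ for every $T\in O(n+1)$, uniqueness of this decomposition together with the zonality of $A,B$ forces each $a_i$ and each $b_i$ to be zonal with pole $1$ as well. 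Substituting gives
$$f(x) = \sum_{i=0}^{m-1} |x|^{2i}\bigl(a_i(x) - \overline{x}\, b_i(x)\bigr) =: \sum_{i=0}^{m-1} |x|^{2i} g_i(x),$$
so the natural candidates are $g_i := a_i - \overline{x}\, b_i$, inheriting a zonal slice structure from $a_i,b_i$.

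Next, I would verify $g_i \in \Ker(D\Delta)$ by a short direct computation. The Leibniz rule, together with $\Delta\overline{x} = 0$ and the harmonicity of $b_i$, gives
$$\Delta(\overline{x}\, b_i) = 2\sum_{k=0}^{n} (\partial_{x_k}\overline{x})(\partial_{x_k} b_i) = 2\,\overline{D} b_i,$$
where $\overline{D} = \partial_{x_0} - \underline{D}$. Using the fundamental factorisation $D\overline{D} = \Delta$, one obtains
$$D\Delta g_i = -2\, D\overline{D} b_i = -2\,\Delta b_i = 0.$$
Biharmonicity is then automatic, because $\Delta g_i \in \Ker(D)$ is left monogenic, hence harmonic, so $\Delta^2 g_i = 0$.

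The main obstacle I foresee is the bookkeeping of zonality: one has to check carefully that the harmonic pieces produced by componentwise Almansi are genuinely zonal with pole $1$ (the uniqueness-plus-$O(n+1)$-invariance step above), and that the resulting slice combinations $g_i = a_i - \overline{x}\, b_i$ qualify as zonal biharmonic with pole $1$ in the precise sense used in the paper --- the scalar components of $g_i$ pick up non-radial factors coming from the $\underline{x}$ part of $\overline{x}$, so this must be reconciled with the invariance condition~\eqref{zonal}. Once this is settled, the remainder of the argument is purely algebraic and reduces to the two identities $D\overline{D} = \Delta$ and $\Delta\overline{x} = 0$, so no further structural difficulty is expected.
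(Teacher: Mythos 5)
Your proposal is correct and follows essentially the same route as the paper's own argument (given there for the generalized version, Theorem \ref{Almansi-theorem-Perotti-Xu-2}, which explicitly follows Perotti's proof of this statement): chain Theorem \ref{Almansi-theorem-Perotti-1} with the classical Almansi decomposition of Theorem \ref{Almansi-decomposition-harmonic}, transfer zonality to the harmonic pieces via uniqueness and $O(n+1)$-invariance of $|x|$, and verify $D\Delta g_i=0$ from $\Delta(\overline{x}\,b_i)=2\overline{D}b_i$ together with $D\overline{D}=\Delta$. The caveat you flag about the zonality of $g_i=a_i-\overline{x}\,b_i$ (the $\underline{x}$ part of $\overline{x}$ is not invariant under the stabilizer of $1$) is genuine but is treated in exactly the same loose way in the paper, where ``zonal'' for $g_i$ must be read as ``slice function whose two components $a_i$, $b_i$ are zonal.''
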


In 2023, the concept of generalized partial-slice monogenic (or regular) functions has been introduced originally to unify  the theory of monogenic  functions and of slice monogenic functions over Clifford algebras; see  \cite{Xu-Sabadini} and the follow up \cite{Ding-Xu}. See   \cite{Xu-Sabadini-2,Xu-Sabadini-3,Xu-Sabadini-4} for more subsequent results.  This idea of partial-slice was carried out and investigated by Ghiloni and Stoppato for quaternions   \cite{Ghiloni-Stoppato-24-1} and developed into the
 theory of $T$-regular functions   in 2024 for an alternative $\ast$-algebra \cite{Ghiloni-Stoppato-24-2}, which proposes a unified theory of regularity in one hypercomplex variable.  In this paper,  we  continue focusing  on generalized partial-slice regular (or monogenic) functions and  provide an analogous version of the Almansi decomposition in this new setting.  Please see Theorems  \ref{Almansi-theorem-Main} and \ref{Almansi-theorem-Perotti-Xu-2},  which generalize   Theorems \ref{Almansi-theorem-Perotti-1} and \ref{Almansi-theorem-Perotti-2}, respectively. To state and prove these  results, in  Section 2 we recall some preliminary notions of generalized partial-slice regular (or monogenic) functions on Clifford algebras. As a consequence of the proof  in  main results,  some enhancements of the Fueter-Sce theorem for generalized partial-slice monogenic functions in \cite{Xu-Sabadini-2} are also obtained; see Theorems \ref{Fueter-Sce-1} and \ref{Futer-Sce-Vekua} in Section 3.

Enclosing the introduction, we point out that  Perotti  in \cite{Perotti-21} established Theorem \ref{Almansi-theorem-Perotti-1}  by using  a very crucial fact that, for any  slice regular $f$,  $x f(x)$ is still  slice regular and $f$ takes the decomposition
$$f(x)=(x f)_{s}^{\prime}(x)- \overline{x}  f_{s}^{\prime}(x),$$
where $f^{\prime}_{s}$ denotes the  spherical derivative of $f$. However,  the situation is different in the more general setting of generalized partial-slice regular (or monogenic) functions, and thus  the  Perotti's proof of Theorem  \ref{Almansi-theorem-Perotti-1} in \cite{Perotti-21} for the slice regular  case should be modified when dealing with Theorem  \ref{Almansi-theorem-Main}.

%%%%%%%%%%%%%%%%%%%%%%%%%%%%%%%%%%%%%%%%%%%
\section{Preliminaries}
In this section, we recall briefly   some  definitions and preliminary results on    generalized partial-slice monogenic (and regular)
functions over Clifford algebras. See \cite{Xu-Sabadini,Xu-Sabadini-2}.
\subsection{Clifford algebras}
Let $\mathfrak{B}=\{e_1,e_2,\ldots, e_n\}$ be the standard orthonormal basis for   $\mathbb{R}^n$.  Denoted by $\mathbb{R}_{n}$ the real Clifford algebra, which is generated  by $\mathfrak{B}$ satisfying that   $e_i e_j + e_j e_i= -2\delta_{ij}, 1\leq i,j\leq n,$
where $\delta_{ij}$ is the Kronecker symbol.
Every element in  $\mathbb{R}_{n}$ can be written as
 $$a=\sum_A a_Ae_A, \quad a_A\in \mathbb{R},$$
 where
$$e_A=e_{j_1}e_{j_2}\cdots e_{j_r},$$
and $A=\{j_1,j_2, \cdots, j_r\}\subseteq\{1, 2, \cdots, n\}$ with $1\leq j_1< j_2 < \cdots < j_r \leq n,e_\emptyset=e_0=1$.
The norm of $a$ is defined by $|a|= ({\sum_{A}|a_{A}|^{2}} )^{\frac{1}{2}}.$ The Clifford conjugation of $a$ is  an automorphism defined by
$$\overline{a} =\sum_Aa_A\overline{e_A},\quad \overline{e_A}=\overline{e_{j_r}}\ldots\overline{e_{j_1}},$$
 where $ \overline{e_j}=-e_j,1\leq j\leq n,\ \overline{e_0}=e_0=1$.

\subsection{Generalized partial-slice monogenic functions}

Given $n\in \mathbb{N}$,  as usual,  the real space  $\mathbb{R}^{n+1}$ shall be identified with the so-called paravectors of $\mathbb{R}_{n}$  via
$$\mathbb{R}^{n+1}\ni(x_0,x_1,\ldots,x_n) \longmapsto   x=x_{0}+\underline{x}=\sum_{i=0}^{n}e_ix_i.$$
In the sequel, we consider the spitting  $n=p+q,$ where $p$ and $q$ are  nonnegative and  positive integers, respectively. Meanwhile, an element  $x=\bx \in\R^{n+1}$ shall be split into
$$\bx=\bx_p+\underline{\bx}_q,  \quad \bx_p=x_{0}+\underline{\bx}_{p}=\sum_{i=0}^{p}x_i e_i\in\R^{p+1},\ \underline{\bx}_q=\sum_{i=p+1}^{p+q}x_i e_i\in\R^q.$$
Note that   here  and all the rest we write the variable $x$ as $\bx$ in bold to emphasize the splitting.

Similarly, the Dirac operator $D_{\bx}$ is split as
\begin{equation*}
D_{\bx}=D_{\bx_p} +D_{\underline{\bx}_q}, \quad D_{\bx_p} =\sum_{i=0}^{p}e_i\partial_{x_i}, \
D_{\underline{\bx}_q} =\sum_{i=p+1}^{p+q}e_i\partial_{x_i}.
\end{equation*}

Denote by $\mathbb{S}$ the sphere of unit $1$-vectors in $\mathbb R^q$, i.e.
$$\mathbb{S}=\big\{\underline{\bx}_q: \underline{\bx}_q^2 =-1\big\}=\big\{\underline{\bx}_q=\sum_{i=p+1}^{p+q}x_i e_i:\sum_{i=p+1}^{p+q}x_i^{2}=1\big\}.$$
Note that, for $\underline{\bx}_q\neq0$, there exists   uniquely  $r>0$ and $\underline{\omega}\in \mathbb{S}$, such that $\underline{\bx}_q=r\underline{\omega}$, more precisely
 $$r=|\underline{\bx}_q|, \quad \underline{\omega}=\frac{\underline{\bx}_q}{|\underline{\bx}_q|}. $$
 When $\underline{\bx}_q= 0$,  $r=0$ but $\underline{\omega}$ is not uniquely, due to that    $\bx=\bx_p+\underline{\omega} \cdot 0$ for all $\underline{\omega}\in \mathbb{S}$.

Now we recall  the  notion of  generalized partial-slice monogenic functions \cite{Xu-Sabadini}.
\begin{definition} \label{definition-slice-monogenic}
 Let $\Omega$ be a domain in $\mathbb{R}^{p+q+1}$. A function $f :\Omega \rightarrow \mathbb{R}_{p+q}$ is called left   \textit{generalized partial-slice monogenic} of type $(p,q)$ if, for all $ \underline{\omega} \in \mathbb S$, its restriction $f_{\underline{\omega}}$ to $\Omega_{\underline{\omega}}:=\Omega\cap (\mathbb{R}^{p+1} \oplus \underline{\omega} \mathbb{R}) \subseteq \mathbb{R}^{p+2}$  has continuous partial derivatives and  satisfies
$$D_{\underline{\omega}}f_{\underline{\omega}}(\bx):=(D_{\bx_p}+\underline{\omega}\partial_{r}) f_{\underline{\omega}}(\bx_p+r\underline{\omega})=0,$$
for all $\bx=\bx_p+r\underline{\omega} \in \Omega_{\underline{\omega}}$.
 \end{definition}
Throughout this article,  we always deal with left generalized partial-slice monogenic functions  of type $(p,q)$ and  hence omit to specify type $(p,q)$ and  denote it by $\mathcal {GSM}(\Omega)$ (or $\mathcal {GSM}^{L}(\Omega)$ when needed).

Likewise, denote by $\mathcal {GSM}^{R}(\Omega)$ the left Clifford module of  all right  generalized partial-slice monogenic functions of type $(p,q)$ $f:\Omega  \rightarrow \mathbb{R}_{p+q}$ which are defined by requiring that
$$f_{\underline{\omega}}(\bx)D_{\underline{\omega}}:={f_{\underline{\omega}} (\bx_p+r\underline{\omega})D_{\bx_p}}+ \partial_{r}f_{\underline{\omega}} (\bx_p+r\underline{\omega})\underline{\omega}=0, \quad \bx=\bx_p+r\underline{\omega} \in \Omega_{\underline{\omega}}.$$

\begin{remark} \rm{
 When $(p,q)=(n-1,1)$, the  notion of generalized partial-slice monogenic functions in Definition  \ref{definition-slice-monogenic} coincides with \textit{monogenic functions}  defined in $\Omega\subseteq\mathbb{R}^{n+1}$   with values in the Clifford algebra $\mathbb{R}_{n}$. For more details on the theory of monogenic functions, see e.g. \cite{Brackx,Colombo-Sabadini-Sommen-Struppa-04,Delanghe-Sommen-Soucek,Gurlebeck}.}
\end{remark}
%which is denoted by $\mathcal {M}(\Omega)$.
\begin{remark}\rm{
When $(p,q)=(0,n)$, Definition  \ref{definition-slice-monogenic}  boils down to  \textit{slice monogenic functions} defined in $\Omega\subseteq\mathbb{R}^{n+1}$ and   with values in the Clifford algebra $\mathbb{R}_{n}$; see \cite{Colombo-Sabadini-Struppa-09,Colombo-Sabadini-Struppa-11}. }
\end{remark}

% which is denoted by $\mathcal{SM}(\Omega)$

\begin{definition} \label{slice-domain}
 Let $\Omega$ be a domain in $\mathbb{R}^{p+q+1}$.

1.   $\Omega$ is called  slice domain if $\Omega\cap\mathbb R^{p+1}\neq\emptyset$  and $\Omega_{\underline{\omega}}$ is a domain in $\mathbb{R}^{p+2}$ for every  $\underline{\omega}\in \mathbb{S}$.

2.   $\Omega$   is called  partially  symmetric with respect to $\mathbb R^{p+1}$ (p-symmetric for short) if, for   $\bx_{p}\in\R^{p+1}, r \in \mathbb R^{+},$ and $ \underline{\omega}  \in \mathbb S$,
$$\bx=\bx_p+r\underline{\omega} \in \Omega\Longrightarrow [\bx]:=\bx_p+r \mathbb S=\{\bx_p+r \underline{\omega}, \ \  \underline{\omega}\in \mathbb S\} \subseteq \Omega. $$
 \end{definition}
Generalized partial-slice monogenic  functions defined on p-symmetric slice domain   possess  the representation formula.
\begin{theorem}  {\bf(Representation Formula)}  \label{Representation-Formula-SM}
Let $\Omega\subseteq \mathbb{R}^{p+q+1}$ be a p-symmetric slice domain and $f:\Omega\rightarrow \mathbb{R}_{p+q}$ be a  generalized partial-slice monogenic function.  Then it holds that,  for $\bx_p+r\underline{\omega} \in \Omega$,
\begin{equation*}\label{Representation-Formula-eq}
f(\bx_p+r \underline{\omega})=\frac{1}{2} (f(\bx_p+r\underline{\eta} )+f(\bx_p-r\underline{\eta}) )+
\frac{ 1}{2} \underline{\omega}\underline{\eta} (  f(\bx_p-r\underline{\eta} )-f(\bx_p+r\underline{\eta})),
\end{equation*}
for any $\underline{\eta}\in \mathbb{S}$.

Moreover, the following two functions do not depend on $\underline{\eta}$:
$$F_1(\bx_p,r)=\frac{1}{2} (f(\bx_p+r\underline{\eta} )+f(\bx_p-r\underline{\eta} ) ),$$
$$F_2(\bx_p,r)=\frac{ 1}{2}\underline{\eta}(  f(\bx_p-r\underline{\eta} )-f(\bx_p+r\underline{\eta})).$$
\end{theorem}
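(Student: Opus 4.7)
The plan is to prove the formula by defining its right-hand side as an auxiliary function $h$, verifying that $h$ is itself generalized partial-slice monogenic, and then identifying $h$ with $f$ via a Cauchy-Kovalevskaya uniqueness argument on each slice. Concretely, for a fixed $\underline{\eta}\in\mathbb{S}$ I would set
\begin{equation*}
h(\bx_p+r\underline{\omega}):=\tfrac{1}{2}(1-\underline{\omega}\underline{\eta})f(\bx_p+r\underline{\eta})+\tfrac{1}{2}(1+\underline{\omega}\underline{\eta})f(\bx_p-r\underline{\eta}),
\end{equation*}
which is well defined on $\Omega$ because $p$-symmetry guarantees $\bx_p\pm r\underline{\eta}\in\Omega$ whenever $\bx_p+r\underline{\omega}\in\Omega$; at $r=0$ the two terms collapse and give $h(\bx_p)=f(\bx_p)$, so $h$ and $f$ agree on the real slice $\Omega\cap\mathbb{R}^{p+1}$.

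Next I verify that $h\in\mathcal{GSM}(\Omega)$. Fix $\underline{\tau}\in\mathbb{S}$ and abbreviate $u_\pm(\bx_p,r):=f(\bx_p\pm r\underline{\eta})$; on $\Omega_{\underline{\tau}}$ we have $h(\bx_p+r\underline{\tau})=\tfrac{1}{2}(1-\underline{\tau}\underline{\eta})u_+(\bx_p,r)+\tfrac{1}{2}(1+\underline{\tau}\underline{\eta})u_-(\bx_p,r)$. Since each $e_i$ with $1\leq i\leq p$ anticommutes with every $e_j$ for $j\geq p+1$ (and $e_0=1$), both constants $1\pm\underline{\tau}\underline{\eta}$ commute with every $e_i$ in $D_{\bx_p}=\sum_{i=0}^p e_i\partial_{x_i}$, so $D_{\bx_p}$ pulls through them. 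The monogenicity of $f$ on $\Omega_{\underline{\eta}}$ supplies the identities $\partial_r u_+=\underline{\eta}D_{\bx_p}u_+$ and $\partial_r u_-=-\underline{\eta}D_{\bx_p}u_-$; substituting, $(D_{\bx_p}+\underline{\tau}\partial_r)h$ becomes a linear combination of $D_{\bx_p}u_+$ and $D_{\bx_p}u_-$ with Clifford coefficients of the shape $(1\mp\underline{\tau}\underline{\eta})\pm\underline{\tau}(1\mp\underline{\tau}\underline{\eta})\underline{\eta}$, each of which vanishes by a short computation using $\underline{\eta}^2=\underline{\tau}^2=-1$. Hence $(D_{\bx_p}+\underline{\tau}\partial_r)h=0$ on $\Omega_{\underline{\tau}}$, and $\underline{\tau}\in\mathbb{S}$ being arbitrary, $h\in\mathcal{GSM}(\Omega)$.

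Both $f$ and $h$ are real-analytic on each slice and coincide on $\Omega\cap\mathbb{R}^{p+1}$. Solving the monogenic equation for $\partial_r$ recasts it as the Cauchy-Kovalevskaya system $\partial_r u=\underline{\tau}D_{\bx_p}u$ with initial datum on $\{r=0\}$; CK uniqueness together with connectedness of $\Omega_{\underline{\tau}}$ forces $h\equiv f$ on every slice, and thus on all of $\Omega$, which is the representation formula. The independence of $F_1$ and $F_2$ from $\underline{\eta}$ is then automatic: substituting $\underline{\omega}=\pm\underline{\eta}'$ for any other $\underline{\eta}'\in\mathbb{S}$ into the proved formula and taking half-sum (respectively, multiplying the half-difference by $\underline{\eta}'$) expresses $F_1(\bx_p,r)$ and $F_2(\bx_p,r)$ in terms of $f(\bx_p\pm r\underline{\eta}')$, precisely as in the original definitions but with $\underline{\eta}$ replaced by $\underline{\eta}'$. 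The main obstacle is the algebraic step in the middle paragraph: careful bookkeeping of the noncommutativity of $\underline{\tau},\underline{\eta}$ with $e_1,\ldots,e_p$ is needed both to move $D_{\bx_p}$ past the constants $1\pm\underline{\tau}\underline{\eta}$ and to see that the resulting Clifford coefficients genuinely cancel; once that identity is in hand, the Cauchy-Kovalevskaya uniqueness on each slice is routine.
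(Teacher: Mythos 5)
Your proposal is correct: the representation-formula computation (the Clifford coefficients $(1\mp\underline{\tau}\underline{\eta})\pm\underline{\tau}(1\mp\underline{\tau}\underline{\eta})\underline{\eta}$ do collapse to $1-\underline{\tau}^2\underline{\eta}^2=0$, and $1\pm\underline{\tau}\underline{\eta}$ indeed commutes with $e_0,\dots,e_p$ so $D_{\bx_p}$ passes through), and the identification of $h$ with $f$ via agreement on $\Omega\cap\mathbb{R}^{p+1}$ plus slice-wise uniqueness is legitimate since the slice domain hypothesis gives $\Omega_{\underline{\tau}}$ connected and meeting $\{r=0\}$. The paper itself only recalls this theorem from the cited reference without proof, and your argument is essentially the standard one for representation formulas in slice analysis.
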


\subsection{Generalized partial-slice  functions}
Throughout this article, let $D \subseteq \mathbb{R}^{p+2}$  be a domain, which is invariant under the reflection  of the $(p+2)$-th variable, i.e.
$$ \bx':=(\bx_p,r) \in D \Longrightarrow   \bx_\diamond':=(\bx_p,-r)  \in D.$$
The  \textit{partially symmetric completion} $ \Omega_{D}\subseteq\mathbb{R}^{p+q+1}$ of  $D$ is defined by
$$\Omega_{D}=\bigcup_{\underline{\omega} \in \mathbb{S}} \, \big \{\bx_p+r\underline{\omega}\  : \ \exists \ \bx_p \in \mathbb{R}^{p+1},\ \exists \ r\geq 0,\  \mathrm{s.t.} \ (\bx_p,r)\in D \big\}.$$

By Definition  \ref{slice-domain}, it is easy to see that a  domain $\Omega \subseteq \mathbb{R}^{p+q+1}$ is p-symmetric if and only if $\Omega=\Omega_{D}$ for some domain $D \subseteq\mathbb{R}^{p+2}$.
\begin{definition}
Let $D\subseteq \mathbb{R}^{p+2}$ be  a domain, invariant under the reflection  of the $(p+2)$-th variable.
A function $f=\mathcal I(F): \Omega_D\longrightarrow  \mathbb{R}_{p+q}$ of the form
  \begin{equation*}\label{genpslice}
  f(\bx)=F_1(\bx')+\underline{\omega} F_2(\bx'), \qquad   \bx=\bx_p+r\underline{\omega}   \in \Omega_{D},
  \end{equation*}
 where the $\mathbb{R}_{p+q} $-valued components $F_1, F_2$ of  $F=(F_1, F_2)$  satisfy
 \begin{equation*}\label{even-odd}
 F_1(\bx_{\diamond}')= F_1(\bx'), \qquad  F_2(\bx_{\diamond}')=-F_2(\bx'), \qquad  \bx' \in D,  \end{equation*}
is called a (left)  generalized partial-slice function.
\end{definition}

Denote by $\mathcal{GS}(\Omega_{D})$ the set of  all  generalized partial-slice functions  on $\Omega_{D}$ induced  by $(F_1,F_2)$  and by ${\mathcal{GS}}^{k}(\Omega_{D})$ when the components $F_1,F_2$ are of class $C^k(D)$.
\begin{definition}\label{definition-GSR}
Let $f(\bx)=F_1(\bx')+\underline{\omega} F_2(\bx') \in {\mathcal{GS}}^{1}(\Omega_{D})$. The function $f$ is called generalized partial-slice regular   of type $(p,q)$ if  $F_1, F_2$ satisfy  the generalized Cauchy-Riemann equations
 \begin{eqnarray}\label{C-R}
 \left\{
\begin{array}{ll}
D_{\bx_p}  F_1(\bx')- \partial_{r} F_2(\bx')=0,
\\
 \overline{D}_{\bx_p}  F_2(\bx')+ \partial_{r} F_1(\bx')=0,
\end{array}
\right.
\end{eqnarray}
for all $\bx'\in D$.
\end{definition}
As before, the type $(p,q)$ will be omitted in the sequel. Denote by $\mathcal {GSR}(\Omega_D)$ the set of all generalized partial-slice  regular  functions {on $\Omega_D$}.

From \cite[Proposition 3.5]{Xu-Sabadini-2} and   the proof of \cite[Lemma 3.7]{Xu-Sabadini-2}, we have
\begin{lemma}  \label{GSR-Harmonic}
  Let $f(\bx)=F_1(\bx')+\underline{\omega} F_2(\bx') \in  \mathcal{GSR}  (\Omega_{D})$. Then $F_1, F_2$  are   real analytic on $D$, $f$ is real analytic on $\Omega_{D}$ and
    \[\Delta_{\bx'}F_1 =0,\quad \Delta_{\bx'}F_2 =0,\]
  where $\Delta_{\bx'}$ is the Laplacian  in $\mathbb {R} ^{p+2}$.
\end{lemma}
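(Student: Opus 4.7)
The plan is to derive the Laplace equations for $F_1$ and $F_2$ directly from the generalized Cauchy–Riemann system (\ref{C-R}), by factoring through the identity $\overline{D}_{\bx_p} D_{\bx_p} = D_{\bx_p}\overline{D}_{\bx_p} = \Delta_{\bx_p}$, and then to obtain real analyticity by appealing to standard elliptic regularity. Concretely, assuming enough smoothness to commute derivatives, I would apply $\overline{D}_{\bx_p}$ from the left to the first equation of (\ref{C-R}) to get $\Delta_{\bx_p} F_1 = \partial_r \overline{D}_{\bx_p} F_2$, and then substitute the second equation $\overline{D}_{\bx_p} F_2 = -\partial_r F_1$ to conclude $(\Delta_{\bx_p} + \partial_r^2) F_1 = \Delta_{\bx'} F_1 = 0$. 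Symmetrically, applying $D_{\bx_p}$ on the left of the second equation and using $D_{\bx_p} F_1 = \partial_r F_2$ from the first yields $\Delta_{\bx'} F_2 = 0$.

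To justify the cross-differentiation above, one must upgrade the $C^1$ hypothesis on $F_1, F_2$ to at least $C^2$ and in fact to real analyticity. Since (\ref{C-R}) is a first-order constant-coefficient system, a standard bootstrap argument shows that any $C^1$ solution satisfies the system in the distributional sense as well, and from there one deduces that each of $F_1, F_2$ is weakly harmonic on $D \subseteq \R^{p+2}$; elliptic regularity for the Laplacian (or, equivalently, the Cauchy–Kowalewski-type reasoning already invoked in \cite{Xu-Sabadini,Xu-Sabadini-2}, which is the cited source of the present lemma) then promotes them to real analytic functions on $D$, at which point the formal calculation above becomes rigorous.

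Finally, the real analyticity of $f$ on the whole of $\Omega_D$ follows from the representation $f(\bx) = F_1(\bx_p, r) + \underline{\omega} F_2(\bx_p, r)$. Away from the singular set $\{\underline{\bx}_q = 0\}$ one has $\underline{\omega} = \underline{\bx}_q / r$, whence $f(\bx) = F_1(\bx_p, r) + \underline{\bx}_q \cdot (F_2(\bx_p, r)/r)$. The parity conditions built into the definition of a generalized partial-slice function, namely $F_1$ even and $F_2$ odd in the $(p+2)$-th variable $r$, guarantee that $F_1(\bx_p, r)$ and $F_2(\bx_p, r)/r$ extend to real analytic functions of $\bx_p$ and $r^2 = |\underline{\bx}_q|^2$, so that $f$ is real analytic across the real-paravector slice $\{\underline{\bx}_q = 0\}$ as well. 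The main obstacle is this initial regularity bootstrap at the $C^1$ level: once analyticity of the components is in hand, the harmonicity and analyticity assertions for $f$ reduce to the explicit differential manipulations and the parity observation above, but passing from $C^1$ data to smoothness has to be done carefully, which is precisely why the authors refer back to the earlier Propositions and Lemmas in \cite{Xu-Sabadini-2}.
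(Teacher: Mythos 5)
Your proposal is correct, and it follows the same route that the paper implicitly relies on: the paper gives no proof of Lemma \ref{GSR-Harmonic} at all, deferring entirely to \cite[Proposition 3.5]{Xu-Sabadini-2} and the proof of \cite[Lemma 3.7]{Xu-Sabadini-2}, and those arguments are precisely your factorization $\overline{D}_{\bx_p}D_{\bx_p}=D_{\bx_p}\overline{D}_{\bx_p}=\Delta_{\bx_p}$ applied to the system (\ref{C-R}) together with elliptic regularity for the first-order Dirac-type operator $D_{\bx_p}+\underline{\omega}\partial_r$ on each slice. Your handling of the two genuinely delicate points --- that left application of the conjugate operator is unaffected by the non-commutativity of $\mathbb{R}_{p+q}$ because all Clifford coefficients act on the same side, and that the even/odd parity of $(F_1,F_2)$ in $r$ is what lets $f$ be written as a real analytic function of $(\bx_p,|\underline{\bx}_q|^2)$ across the set $\underline{\bx}_q=0$ --- is exactly what is needed, and matches the $(G_1,G_2)$ device the paper introduces in (\ref{spherical-value-derivative-G}).
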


Let us recall \cite[Lemma 3.4]{Xu-Sabadini-2}:
\begin{lemma}\label{Fueter-Sce-Qian-lemma-1}
Let    $f(\bx)=F_1(\bx')+\underline{\omega} F_2(\bx') \in {\mathcal{GS}}^{2}(\Omega_{D})\cap C^{2}(\Omega_{D})$. Then it holds that for all $\bx \in\Omega_{D}$
\begin{eqnarray*}\label{relation-harm}
\Delta_{\bx} f(\bx)= \Delta_{\bx'}F_1(\bx')+ \underline{\omega} \Delta_{\bx'}F_2(\bx')+ (q-1) ( (\frac{1}{r}\partial_{r}) F_1(\bx')+\underline{\omega} (\partial_{r}\frac{1}{r}) F_2(\bx') ).
\end{eqnarray*}
\end{lemma}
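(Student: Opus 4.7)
The plan is to decompose the Euclidean Laplacian as $\Delta_{\bx} = \Delta_{\bx_p} + \Delta_{\underline{\bx}_q}$, where the two pieces act respectively on the first $p+1$ and the last $q$ Cartesian coordinates, and to evaluate each piece separately on the two summands $F_1(\bx')$ and $\underline{\omega}F_2(\bx')$ of $f$.

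The $\Delta_{\bx_p}$ contribution is immediate: since $r = |\underline{\bx}_q|$ and $\underline{\omega} = \underline{\bx}_q/r$ are functions of $\underline{\bx}_q$ only, they are constants with respect to the operators $\partial_{x_0}, \ldots, \partial_{x_p}$, so that $\Delta_{\bx_p} f = (\Delta_{\bx_p} F_1)(\bx') + \underline{\omega}\,(\Delta_{\bx_p} F_2)(\bx')$.

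For the $\Delta_{\underline{\bx}_q}$ contribution I would work initially on the open set where $\underline{\bx}_q \neq 0$. Because $F_1$ depends on $\underline{\bx}_q$ only through $r$, the standard radial Laplacian formula on $\mathbb{R}^q$ gives $\Delta_{\underline{\bx}_q} F_1 = \partial_r^2 F_1 + \frac{q-1}{r}\partial_r F_1$. For the term $\underline{\omega}F_2$ I would rewrite it as $\underline{\bx}_q\,G(r)$ with $G(r) := F_2(\bx_p,r)/r$ and apply the scalar Leibniz rule, using that each coordinate of $\underline{\bx}_q$ is harmonic, to get $\Delta_{\underline{\bx}_q}(\underline{\bx}_q G) = \underline{\bx}_q\,\Delta_{\underline{\bx}_q}G + 2\sum_{i=p+1}^{p+q} e_i\,\partial_{x_i}G$. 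The gradient sum reduces, via $\partial_{x_i}G = G'(r)\,x_i/r$, to $2G'(r)\,\underline{\omega}$; the radial Laplacian of $G$ is $G'' + (q-1)G'/r$. Summing and unwinding $G = F_2/r$ yields $\underline{\omega}\bigl[\partial_r^2 F_2 + (q-1)\partial_r(F_2/r)\bigr]$, which is precisely $\underline{\omega}\,\partial_r^2 F_2 + (q-1)\,\underline{\omega}\,(\partial_r\tfrac{1}{r})F_2$.

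Adding the two contributions and combining the purely second-order terms via $\Delta_{\bx'} = \Delta_{\bx_p} + \partial_r^2$ delivers the stated identity on $\Omega_D \setminus \{\underline{\bx}_q = 0\}$. The main subtlety is the singular nature of $\underline{\omega}$ at the real axis $\underline{\bx}_q = 0$; to propagate the formula to all of $\Omega_D$ I would invoke the $C^2$ hypothesis on $f$, which, together with the oddness condition $F_2(\bx_p,-r) = -F_2(\bx_p,r)$ forcing $F_2$ to vanish at $r = 0$, ensures that $F_2/r$ and therefore both sides of the identity extend continuously across $r = 0$.
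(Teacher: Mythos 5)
Your computation is correct. Note that the paper itself gives no proof of this lemma --- it is quoted verbatim from \cite[Lemma 3.4]{Xu-Sabadini-2} --- but your argument (splitting $\Delta_{\bx}=\Delta_{\bx_p}+\Delta_{\underline{\bx}_q}$, using the radial Laplacian on $\mathbb{R}^q$ for $F_1$, and the Leibniz expansion of $\Delta_{\underline{\bx}_q}(\underline{\bx}_q\,G)$ with $G=F_2/r$) is the standard derivation and all the coefficients check out: $rG''+(q+1)G'=\partial_r^2F_2+(q-1)\partial_r(F_2/r)$ as you claim. One small point on the extension across $r=0$: besides $F_2/r$, the term $\tfrac{1}{r}\partial_rF_1$ also needs the parity argument (evenness of $F_1$ forces $\partial_rF_1(\cdot,0)=0$), after which continuity of $\Delta_{\bx}f$ from the $C^2(\Omega_D)$ hypothesis closes the argument exactly as you indicate.
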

Due to Lemmas \ref{GSR-Harmonic} and \ref{Fueter-Sce-Qian-lemma-1}, one can  obtain directly   that,  for $f(\bx)=F_1(\bx')+\underline{\omega} F_2(\bx') \in  \mathcal{GSR}  (\Omega_{D})$, the part $F_1(\bx')$ is hyperbolic harmonic
$$\big(r^{2}\Delta_{\bx}-(q-1)r \partial_{r}  \Big) F_1(\bx')=0, $$
%or $$\big(x_{i}^{2}\Delta_{\bx}-(q-1)x_{i} \partial_{x_{i}} \Big) F_1(\bx')=0, \quad i=p+1,\ldots, p+q,$$
 %$$\big(r^{2}\Delta_{\bx}-(q-1)E_{\underline{\bx}_{q}}   \Big) F_1(\bx')=0, $$
and $\underline{\omega}F_2(\bx')$  is the eigenfunction of the hyperbolic Laplace operator
%or $$\big(x_{i}^{2}\Delta_{\bx}-(q-1)x_{i} \partial_{x_{i}} \Big) (\underline{\omega}F_2(\bx'))=-(q-1) \frac{e_ix_i}{r}F_2(\bx'), \quad i=p+1,\ldots, p+q,$$
%$$\big(r^{2}\Delta_{\bx}-(q-1)E_{\underline{\bx}_{q}}   \Big) (\underline{\omega}F_2(\bx'))=-(q-1)\underline{\omega}F_2(\bx'), $$
$$\big(r^{2}\Delta_{\bx}-(q-1)r \partial_{r}  \Big) (\underline{\omega}F_2(\bx'))=-(q-1)\underline{\omega}F_2(\bx'). $$

Now we recall a relationship between the set of functions  $\mathcal {GSM}$ and $\mathcal {GSR}$ defined in  p-symmetric domains  \cite{Xu-Sabadini}.

\begin{theorem} \label{relation-GSR-GSM}

(i) For a p-symmetric domain $\Omega=\Omega_{D}$ with $\Omega  \cap \mathbb{R}^{p+1}= \emptyset$, it holds that $\mathcal {GSM}(\Omega) \supsetneqq \mathcal {GSR}(\Omega_{D})$.

(ii) For a p-symmetric domain $\Omega=\Omega_{D}$ with $\Omega  \cap \mathbb{R}^{p+1}\neq \emptyset$,  it holds  that $\mathcal {GSM}(\Omega) = \mathcal {GSR}(\Omega_{D})$.
\end{theorem}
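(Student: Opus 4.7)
The plan is to prove the forward inclusion $\mathcal{GSR}(\Omega_D)\subseteq\mathcal{GSM}(\Omega)$ by direct computation (valid in both cases), to handle the reverse inclusion in case (ii) via the Representation Formula, and to exhibit an explicit counter-example for the strict inequality in case (i).

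For the forward inclusion, I would take $f(\bx)=F_1(\bx')+\underline{\omega}F_2(\bx')\in\mathcal{GSR}(\Omega_D)$. Since $e_0=1$ commutes with $\underline{\omega}$ while $e_1,\dots,e_p$ anticommute with $\underline{\omega}$, the identity $D_{\bx_p}(\underline{\omega}F_2)=\underline{\omega}\overline{D}_{\bx_p}F_2$ holds, where $\overline{D}_{\bx_p}:=\partial_{x_0}-\sum_{i=1}^{p}e_i\partial_{x_i}$. Combined with $\underline{\omega}^2=-1$, this gives, on each slice $\Omega_{\underline{\omega}}$,
$$
(D_{\bx_p}+\underline{\omega}\partial_r)(F_1+\underline{\omega}F_2)=\bigl(D_{\bx_p}F_1-\partial_rF_2\bigr)+\underline{\omega}\bigl(\overline{D}_{\bx_p}F_2+\partial_rF_1\bigr),
$$
which vanishes identically by (\ref{C-R}). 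For case (ii), first observe that since $\Omega=\Omega_D$ with $D$ a domain in $\mathbb{R}^{p+2}$, each slice $\Omega_{\underline{\omega}}$ is homeomorphic to $D$ (via $(\bx_p,t)\mapsto\bx_p+t\underline{\omega}$) and hence connected; together with $\Omega\cap\mathbb{R}^{p+1}\neq\emptyset$, $\Omega$ is a p-symmetric slice domain, so Theorem \ref{Representation-Formula-SM} applies. Given $f\in\mathcal{GSM}(\Omega)$, I would define $F_1,F_2$ by the averaging formulas from that theorem; their independence from $\underline{\eta}\in\mathbb{S}$, real analyticity on $D$, and the required symmetries in $r$ are all immediate. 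To verify (\ref{C-R}), fix $(\bx_p,r)\in D$ with $r>0$: by p-symmetry $\bx_p+r\underline{\omega}\in\Omega_{\underline{\omega}}$ for every $\underline{\omega}\in\mathbb{S}$, and running the forward computation in reverse yields
$$
\bigl(D_{\bx_p}F_1-\partial_rF_2\bigr)+\underline{\omega}\bigl(\overline{D}_{\bx_p}F_2+\partial_rF_1\bigr)=0.
$$
Specializing this identity to $\underline{\omega}$ and $-\underline{\omega}$ and adding/subtracting isolates the two summands, producing (\ref{C-R}) for $r>0$; real analyticity extends it to $r=0$.

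For the strictness in case (i), I would exploit the fact that when $\Omega\cap\mathbb{R}^{p+1}=\emptyset$, distinct slices $\Omega_{\underline{\omega}}$ and $\Omega_{\underline{\omega}'}$ (with $\underline{\omega}'\neq\pm\underline{\omega}$) meet only in $\Omega\cap\mathbb{R}^{p+1}=\emptyset$, so monogenic data can be prescribed slicewise without compatibility constraints. Concretely, fix $\underline{\omega}_0\in\mathbb{S}$ and a non-zero real constant $c$, and set $f(\bx)=c$ for $\bx\in\Omega_{\underline{\omega}_0}$ and $f(\bx)=0$ otherwise; each slicewise restriction is constant, hence generalized partial-slice monogenic, so $f\in\mathcal{GSM}(\Omega)$. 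If $f$ were in $\mathcal{GSR}(\Omega_D)$, writing $f=F_1+\underline{\omega}F_2$ and subtracting the identities $F_1+\underline{\omega}_0F_2=c$ and $F_1+\underline{\omega}F_2=0$ at a fixed $(\bx_p,r)$ with $r>0$ would force $(\underline{\omega}_0-\underline{\omega})F_2=c$, i.e.\ $F_2=-c(\underline{\omega}_0-\underline{\omega})/|\underline{\omega}_0-\underline{\omega}|^2$, which depends on $\underline{\omega}$ and contradicts the partial-slice form. The main, albeit modest, obstacle lies in case (ii), in legitimately separating the scalar and $\underline{\omega}$-parts of the slicewise Dirac equation; this is made possible precisely by the freedom to vary $\underline{\omega}$ at fixed $(\bx_p,r)$ that p-symmetry provides.
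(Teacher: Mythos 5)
The paper does not actually prove this statement: Theorem \ref{relation-GSR-GSM} is recalled from the reference [Xu--Sabadini] with no argument given here, so there is no in-paper proof to compare against; I judge your proposal on its own. It is essentially correct and is the natural argument. The forward inclusion is right: $e_i\underline{\omega}=-\underline{\omega}e_i$ for $1\le i\le p$ and $e_0\underline{\omega}=\underline{\omega}e_0$ give $D_{\bx_p}(\underline{\omega}F_2)=\underline{\omega}\overline{D}_{\bx_p}F_2$, and with $\underline{\omega}^2=-1$ the slicewise Dirac equation splits exactly into the two equations of (\ref{C-R}). In case (ii), verifying that each $\Omega_{\underline{\omega}}$ is the homeomorphic image of $D$ (so that $\Omega$ is a p-symmetric slice domain and Theorem \ref{Representation-Formula-SM} applies), and then isolating the two summands by evaluating the slicewise equation at $\underline{\omega}$ and $-\underline{\omega}$, is precisely what is needed. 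Two points deserve tightening. First, real analyticity of $F_1,F_2$ is not ``immediate'' from the averaging formulas; fortunately you do not need it: Definition \ref{definition-slice-monogenic} already gives $F_1,F_2\in C^1(D)$, and the Cauchy--Riemann equations, established on $\{r\neq0\}\cap D$, extend to $r=0$ by continuity of the first derivatives since that set is dense in $D$. Second, your counterexample in (i) requires the existence of $\underline{\omega}\neq\pm\underline{\omega}_0$, i.e.\ $q\ge2$; for $q=1$ no connected p-symmetric $\Omega$ with $\Omega\cap\mathbb{R}^{p+1}=\emptyset$ exists, so (i) is vacuous there, but this should be said. Note also that your witness is discontinuous across slices --- admissible under Definition \ref{definition-slice-monogenic}, which imposes regularity only slice by slice, but a smoother choice such as $f(\bx)=\underline{\bx}_q/|\underline{\bx}_q|$ (locally constant on each slice, hence in $\mathcal{GSM}(\Omega)$, yet forcing ``$F_2\equiv1$'', which is even rather than odd, hence not in $\mathcal{GSR}(\Omega_D)$) makes the same point without exploiting that loophole.
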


\begin{definition}\label{GSM-GSR}
Let $f\in   \mathcal{GS} (\Omega_{D}) $.
The function $f^{\circ}_{s}: \Omega_{D} \longrightarrow \mathbb{R}_{p+q}$, called spherical value of $f$, and the   function $f^{\prime}_{s}: \Omega_{D}\setminus \mathbb{R}^{p+1} \longrightarrow \mathbb{R}_{p+q}$,  called spherical derivative of $f$, are defined as, respectively,
$$f^{\circ}_{s}(\bx)=\frac{1}{2}\big( f(\bx)+f(\bx_{\diamond}) \big), $$
$$f^{\prime}_{s}(\bx)=\frac{1}{2}\underline{\bx}_{q}^{-1}\big( f(\bx)-f(\bx_{\diamond}) \big).$$
\end{definition}

By definition, it hold that
$$f^{\circ}_{s}(\bx)= F_1 (\bx')=f^{\circ}_{s}(\bx_{\diamond}), $$
$$f^{\prime}_{s}(\bx)= \frac{1}{r}F_2 (\bx')= f^{\prime}_{s}(\bx_{\diamond}),$$
and \begin{equation}\label{spherical-value-derivative}
 f(\bx)=f^{\circ}_{s}(\bx)+ \underline{\bx}_{q} f_{s}^{\prime}(\bx), \quad \bx \in \Omega_{D}\setminus \mathbb{R}^{p+1}.
\end{equation}

For $\Omega_{D} \cap \mathbb{R}^{p+1}\neq \emptyset$ and  $F_2\in C^{1}({D})$, it follows that
$$\lim_{r\rightarrow0}\frac{1}{r}F_2(\bx_{p},r)=\lim_{r\rightarrow0}\frac{1}{r}(F_2(\bx_{p},r)-F_2(\bx_{p},0))=\partial_{r} F_2(\bx_{p},0),\quad  (\bx_{p},0)\in  D. $$
At this moment,  $f^{\prime}_{s}$ could extend continuously to the whole space $\Omega_{D}$.

Furthermore, thanks to the even-oddness of the real analytic functions pair $(F_1,F_2)$ w.r.t. $r$,  there  exists a pair of real analytic functions $(G_1,G_2)$  such that
$$  F_ 1 ( \boldsymbol { x } _ p, r ) = G _ 1 ( \boldsymbol { x } _ p, r ^ 2 ), \quad F _ 2 ( \boldsymbol { x } _ p, r ) = r G_ 2 ( \boldsymbol { x } _ p, r ^ 2 ). $$
Hence,
\begin{equation}\label{spherical-value-derivative-G}
 f_s^\circ(\boldsymbol x)=G_1(\boldsymbol{x}_p,r^2),\quad  f_{s}^{\prime}(\boldsymbol x)=G_{2}(\boldsymbol{x}_p,r^2).
\end{equation}

\section{Main results}

 To establish the Almansi-type decomposition in our setting,  we shall follow the idea of Perotti in \cite[Theorem 3.4.1]{Perotti-19} by    computing the  Laplacian of spherical derivative and spherical value for   generalized partial-slice    functions.
\begin{lemma}\label{harmonic-Xu-lemma}
Let $K=2(q+1)\in \mathbb{N}, $  and $f(\bx)=F_1(\bx')+\underline{\omega} F_2(\bx') \in {\mathcal{GS}}^{K}(\Omega_{D})$.
Let $f_s^\circ(\boldsymbol x)=G_1(\boldsymbol{x}_p,r^2)$ and $ f_{s}^{\prime}(\boldsymbol x)=G_{2}(\boldsymbol{x}_p,r^2)$ as in (\ref{spherical-value-derivative-G})   and denote $\partial_{p+2}G_j (\boldsymbol{x}_p,r^2)=\frac{\partial G_j}{\partial{r} }(\boldsymbol{x}_p,r^2), $ $j=1,2.$
If $F_j$ satisfy the Helmholtz equation
\begin{equation}\label{Helmholtz-equation}
(\Delta_{\bx'}+\lambda^{2})F_j(\bx') =0,\quad j=1,2,
\end{equation}
where $\Delta_{\bx'}$ is the Laplacian in $\mathbb {R} ^{p+2}$ and  $\lambda  \in \mathbb{R}$ is a constant,   then we have \\
$({i})$
 For each $k=1,2,\ldots, \left[\frac{q-1}{2}\right],$
\[(\Delta_{\bx}+\lambda^{2})^k f_s^{\prime}(\boldsymbol{x} )=2^k(q-3)(q-5)\cdots(q-2k-1)\partial_{p+2} ^k G_2(\boldsymbol{x}_p,r^2).\]
$({ii})$
For each $k=1,2,\ldots,\left[\frac{q+1}{2}\right]$,
\[
(\Delta_{\bx}+\lambda^{2})^kf_s^\circ(\boldsymbol{x} )=2^k(q-1)(q-3)\cdots(q-2k+1)\partial_{p+2}^kG_1(\boldsymbol{x}_p,r^2).\]
\end{lemma}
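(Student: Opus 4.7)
The plan is to view both $f_{s}^{\circ}$ and $f_{s}^{\prime}$ as generalized partial-slice functions whose $\underline{\omega}$-component vanishes (by (\ref{spherical-value-derivative-G}) each of them depends only on $(\bx_{p},r^{2})$), apply Lemma \ref{Fueter-Sce-Qian-lemma-1} to compute $\Delta_{\bx}$ on them, absorb the leading Laplacian terms via the Helmholtz hypothesis (\ref{Helmholtz-equation}) recast in the auxiliary variable $t=r^{2}$, and then iterate by induction on $k$.

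First I would substitute $F_{1}(\bx_{p},r)=G_{1}(\bx_{p},t)$ and $F_{2}(\bx_{p},r)=rG_{2}(\bx_{p},t)$ into $(\Delta_{\bx'}+\lambda^{2})F_{j}=0$. The chain-rule computation of $\partial_{r}^{2}F_{j}$, followed by division by $r$ in the second case (which extends continuously across $r=0$ by real analyticity, Lemma \ref{GSR-Harmonic}), produces the twin identities
\begin{align*}
\Delta_{\bx_{p}}G_{1}+4t\,\partial_{p+2}^{2}G_{1}+2\,\partial_{p+2}G_{1}+\lambda^{2}G_{1}&=0,\\
\Delta_{\bx_{p}}G_{2}+4t\,\partial_{p+2}^{2}G_{2}+6\,\partial_{p+2}G_{2}+\lambda^{2}G_{2}&=0.
\end{align*}
Since $f_{s}^{\circ}$ has partial-slice components $(G_{1}(\bx_{p},r^{2}),0)$, Lemma \ref{Fueter-Sce-Qian-lemma-1} together with $\partial_{r}^{2}(G_{1}(\bx_{p},r^{2}))=2\partial_{p+2}G_{1}+4r^{2}\partial_{p+2}^{2}G_{1}$ and $\tfrac{1}{r}\partial_{r}(G_{1}(\bx_{p},r^{2}))=2\partial_{p+2}G_{1}$ gives $\Delta_{\bx}f_{s}^{\circ}=\Delta_{\bx_{p}}G_{1}+4r^{2}\partial_{p+2}^{2}G_{1}+2q\,\partial_{p+2}G_{1}$; substituting the first identity above yields $(\Delta_{\bx}+\lambda^{2})f_{s}^{\circ}=2(q-1)\partial_{p+2}G_{1}(\bx_{p},r^{2})$, the $k=1$ case of $(ii)$. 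An entirely parallel computation from the second identity produces the $k=1$ case of $(i)$.

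For the induction step, applying $\partial_{p+2}^{k}$ to the two identities and using Leibniz on $\partial_{p+2}^{k}(4t\,\partial_{p+2}^{2}G_{j})=4t\,\partial_{p+2}^{k+2}G_{j}+4k\,\partial_{p+2}^{k+1}G_{j}$ shows that $\partial_{p+2}^{k}G_{j}$ satisfies a relation of the same shape but with first-order coefficient shifted to $2+4k$ for $j=1$ and to $6+4k$ for $j=2$. Re-applying Lemma \ref{Fueter-Sce-Qian-lemma-1} to the partial-slice function $\partial_{p+2}^{k}G_{j}(\bx_{p},r^{2})$ then yields the one-step recurrences $c_{k+1}^{(1)}=2(q-2k-1)\,c_{k}^{(1)}$ and $c_{k+1}^{(2)}=2(q-2k-3)\,c_{k}^{(2)}$, which unfold to the stated products; the regularity hypothesis $K=2(q+1)$ supplies enough derivatives to sustain the induction up to the stated bounds. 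The main obstacle will be purely organizational: keeping straight the factor of $2$ arising from $\partial_{r}=2r\partial_{p+2}$ in the three places where it enters (the $\partial_{r}^{2}$ term, the $\tfrac{1}{r}\partial_{r}$ term, and the Leibniz shifts $2+4k$, $6+4k$), and verifying that the shifted equations combine correctly with the fixed coefficient $2q$ from Lemma \ref{Fueter-Sce-Qian-lemma-1}.
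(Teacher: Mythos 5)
Your proposal is correct and follows essentially the same route as the paper: the same substitution $F_1=G_1(\bx_p,r^2)$, $F_2=rG_2(\bx_p,r^2)$ into the Helmholtz equation to get the two identities with coefficients $2$ and $6$, the same base-case computation yielding $2(q-1)$ and $2(q-3)$, and the same Leibniz shift $4k\,\partial_{p+2}^{k+1}G_j$ driving the induction. The only cosmetic difference is that you compute $\Delta_{\bx}$ of the radial functions via Lemma \ref{Fueter-Sce-Qian-lemma-1}, while the paper expands $\Delta_{\underline{\bx}_q}\bigl(G_j(\bx_p,r^2)\bigr)$ directly; these are the same calculation.
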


\begin{proof}
(i) We shall continue to use the symbols from Section $2$ and  write $f=\mathcal{I}(F) \in   \mathcal{GS}^{K} (\Omega_{D})$ as
$$f(\bx)=f^{\circ}_{s}(\bx)+ \underline{\bx}_{q} f_{s}^{'}(\bx), $$
where
\[f^{\circ}_{s}(\bx)=F_1(\boldsymbol{x}_p,r)=G_1(\boldsymbol{x}_p,r^2),\quad f_{s}^{\prime}(\boldsymbol{x} )=\frac{1}{r}F_2(\boldsymbol{x}_p,r)=G_{2}(\boldsymbol{x}_p,r^2).\]
 We shall use mathematical induction on $k$. First, let us  prove the result for $k = 1 $, i.e.
$$(\Delta_{\bx}+\lambda^{2})f_{s}^{\prime}(\boldsymbol{x} )=2(q-3)\partial_{p+2}G_{2}(\boldsymbol{x}_p,r^2).$$
Under the condition in (\ref{Helmholtz-equation}), it follows that
 \[(\Delta_{\bx'}+\lambda^{2})F_2(\boldsymbol{x}_p,r)=r(\Delta_{\bx_{p}}+\lambda^{2})G_2(\boldsymbol{x}_p,r^{2})+\partial_r^2 (rG_2(\boldsymbol{x}_p,r^{2}))=0,\]
%$$\partial_{r}^{2}(rG_{2}(\boldsymbol{x}_p,r^2))=6r\partial_{p+2}G_{2}(\boldsymbol{x}_p,r^2)+4r^{3}\partial_{p+2}^{2}G_{2}(\boldsymbol{x}_p,r^2).$$
which implies
\begin{align}\label{relation-L}
(\Delta_{\bx_{p}}+\lambda^{2}) G_{2}(\boldsymbol{x}_p,r^2)&=-\frac{1}{r} \partial_r^2\left(rG_2(\boldsymbol{x}_p,r^2)\right)\nonumber \\&=\frac{-1}{r} \partial_r\big(G_2(\boldsymbol{x}_p,r^2)+2r^3 \partial_{p+2}G_2(\boldsymbol{x}_p,r^2)\big)\nonumber\\
&=-6\partial_{p+2}G_2(\boldsymbol{x}_p,r^2)-4r^2\partial_{p+2}^2 G_2(\boldsymbol{x}_p,r^2). \end{align}
Hence, by  straightforward computations,
\begin{align}
(\Delta_{\bx}+\lambda^{2}) f_{s}^{\prime}(\boldsymbol{x} )
&=(\Delta_{\bx_{p}}+\lambda^{2})G_{2}(\boldsymbol{x}_p,r^2)+ \Delta_{\underline{\boldsymbol{x}}_q} (G_{2}(\boldsymbol{x}_p,r^2))\nonumber\\
&=(\Delta_{\bx_{p}}+\lambda^{2})G_{2}(\boldsymbol{x}_p,r^2)+\sum _{i={p+1}}^{p+q} \frac{\partial}{\partial{x_i}}\big(2x_i\partial_{p+2}G_2(\boldsymbol{x}_p,r^2)\big)\nonumber\\
&=(\Delta_{\bx_{p}}+\lambda^{2})G_2(\boldsymbol{x}_p,r^2)+4r^2\partial_{p+2}^2 G_{2}(\boldsymbol{x}_p,r^2)+2q\partial_{p+2} G_{2}(\boldsymbol{x}_p,r^2) \nonumber \\
&=2(q-3)\partial_{p+2} G_{2}(\boldsymbol{x}_p,r^2). \nonumber
\end{align}

Now we   suppose that (i) holds for $1<k\leq \left[\frac{q-1}{2}\right]-1$.
 To show (i) holds for $k+1$, we need the following
 \begin{eqnarray*}
& & (\Delta_{\bx}+\lambda^{2}) (\partial_{p+2}^{k}G_{2}(\boldsymbol{x}_{p},r^{2}))\\
&=&(\Delta_{\boldsymbol{x}_p}+\lambda^{2})\partial_{p+2}^{k}G_2(\boldsymbol{x}_{p},r^{2})+4r^{2}\partial_{p+2}^{k+2}
G_2(\boldsymbol{x}_{p},r^{2})+2q\partial_{p+2}^{k+1}G_2(\boldsymbol{x}_{p},r^{2})
\\
 &=&  \partial_{p+2}^{k}\left((\Delta_{\boldsymbol{x}_p}+\lambda^{2})G_2(\boldsymbol{x}_{p},r^{2})+4r^{2}\partial_{p+2}^{2}G_2(\boldsymbol{x}_{p},r^{2})
 +2q\partial_{p+2}G_{2}(\boldsymbol{x}_{p},r^{2})\right) \\
 && -4k\partial_{p+2}^{k+1}G_2(\boldsymbol{x}_{p},r^{2})
 \\
 &=& \partial_{p+2}^k\big (\left(-6+2q\right)\partial_{p+2} G_2(\boldsymbol{x}_{p},r^{2}) \big)-4k\partial_{p+2}^{k+1}G_2(\boldsymbol{x}_{p},r^{2})
 \\
&=& 2(q-2k-3)\partial_{p+2}^{k+1} G_2(\boldsymbol{x}_{p},r^{2}),
\end{eqnarray*}
 where the third equality comes from (\ref{relation-L}).

Therefore, by the induction hypothesis, we infer that
\[\begin{aligned}
(\Delta_{\boldsymbol{x}}+\lambda^{2})^{k+1}f_s'(\boldsymbol{x} )
&=2^k(q-3)(q-5)\cdots(q-2k-1)(\Delta_{\boldsymbol{x}}+\lambda^{2}) (\partial_{p+2}^kG_2(\boldsymbol{x}_p,r^2))\\
&=2^{k+1}(q-3)(q-5)\cdots (q-2k-3)\partial_{p+2}^{k+1}G_2(\boldsymbol{x}_p,r^2).\end{aligned}\]

(ii)
In a similar way to the proof in (i), we use mathematical induction.  First, we prove the result for $k=1$, i.e.
$$(\Delta_{\bx}+\lambda^{2})f_{s}^{\circ}(\boldsymbol{x} ) =2(q-1)\partial_{p+2} G_1(\boldsymbol{x}_p,r^2).$$
Under the condition in (\ref{Helmholtz-equation}), it holds that
 \[(\Delta_{\bx'} +\lambda^{2})F_1(\boldsymbol{x}_p,r)= (\Delta_{\boldsymbol{x}_p}+\lambda^{2})G_1(\boldsymbol{x}_p,r^{2})+\partial_r^2 ( G_1(\boldsymbol{x}_p,r^{2}))=0,\]
which implies
\begin{align}\label{relation-LL}
(\Delta_{\boldsymbol{x}_p}+\lambda^{2}) G_{1}(\boldsymbol{x}_p,r^2)&=-  \partial_r^2\left( G_1(\boldsymbol{x}_p,r^2)\right)\nonumber \\
&= -2\partial_{p+2}G_1(\boldsymbol{x}_p,r^2)-4r^2\partial_{p+2}^2 G_1(\boldsymbol{x}_p,r^2).\end{align}
Hence, we get
\[\begin{aligned}
 (\Delta_{\bx}+\lambda^{2}) f_{s}^{\circ}(\boldsymbol{x} )
&=(\Delta_{\bx_{p}}+\lambda^{2})G_1(\boldsymbol{x}_p,r^2)+4r^2\partial_{p+2}^2 G_{1}(\boldsymbol{x}_p,r^2)+2q\partial_{p+2} G_{1}(\boldsymbol{x}_p,r^2)
\\
&= 2(q-1)\partial_{p+2} G_{1}(\boldsymbol{x}_p,r^2).
\end{aligned}\]
Now  suppose that (ii) holds for $1<k\leq \left[\frac{q+1}{2}\right]-1$. Now it remains to show that (ii) holds for $k+1$.  In fact,
 \begin{eqnarray*}
& & (\Delta_{\bx}+\lambda^{2}) (\partial_{p+2}^{k}G_{1}(\boldsymbol{x}_{p},r^{2}))\\
&=& (\Delta_{\boldsymbol{x}_p}+\lambda^{2})\partial_{p+2}^{k}G_1(\boldsymbol{x}_{p},r^{2})+4r^{2}\partial_{p+2}^{k+2}
G_1(\boldsymbol{x}_{p},r^{2})+2q\partial_{p+2}^{k+1}G_1(\boldsymbol{x}_{p},r^{2})
\\
 &=&  \partial_{p+2}^{k}\left((\Delta_{\bx_{p}}+\lambda^{2})G_1(\boldsymbol{x}_{p},r^{2})+4r^{2}\partial_{p+2}^{2}G_1(\boldsymbol{x}_{p},r^{2})
 +2q\partial_{p+2}G_{1}(\boldsymbol{x}_{p},r^{2})\right) \\
 && -4k\partial_{p+2}^{k+1}G_1(\boldsymbol{x}_{p},r^{2})
 \\
 &=& \partial_{p+2}^k\big (\left(2q-2\right)\partial_{p+2} G_1(\boldsymbol{x}_{p},r^{2}) \big)-4k\partial_{p+2}^{k+1}G_1(\boldsymbol{x}_{p},r^{2})
 \\
&=& 2(q-2k-1)\partial_{p+2}^{k+1} G_1(\boldsymbol{x}_{p},r^{2}),
\end{eqnarray*}
 where the third equality comes from (\ref{relation-LL}).

 Therefore, by the induction hypothesis, we infer
\[\begin{aligned}
(\Delta_{\boldsymbol{x}}+\lambda^{2})^{k+1}f_s^{\circ}(\boldsymbol{x} )
&=2^k(q-3)(q-5)\cdots(q-2k+1)(\Delta_{\boldsymbol{x}} +\lambda^{2})(\partial_{p+2}^kG_1(\boldsymbol{x}_p,r^2))\\
&=2^{k+1}(q-3)(q-5)\cdots (q-2k-1) \partial_{p+2}^{k+1}G_1(\boldsymbol{x}_p,r^2).\end{aligned}\]
The proof is complete.
\end{proof}

\begin{remark} \rm{
When the Helmholtz equation in (\ref{Helmholtz-equation}) is replaced by the Klein-Gordon equation
\begin{equation*}
(\Delta_{\bx'}-\lambda^{2})F_j(\bx') =0,\quad j=1,2,
\end{equation*}
the conclusion in Lemma \ref{harmonic-Xu-lemma}  still holds if we consider the operator $(\Delta_{\bx}-\lambda^{2})^{k} $, instead of $(\Delta_{\bx}+\lambda^{2})^{k} $.

In addition,  Lemma \ref{harmonic-Xu-lemma} holds in a more general case  $\lambda=\lambda(\boldsymbol{x}_p) \in \mathbb{R}_n$.

}
\end{remark}

\begin{theorem}\label{spherical-derivative-value}
  Let $D\subseteq\mathbb{R}^{p+2}$ be  a domain, $f=\mathcal{I}(F): \Omega_D \longrightarrow \mathbb {R}_{p+q} $ be a generalized partial-slice regular function. Let $f_s^\circ(\boldsymbol x)=G_1(\boldsymbol{x}_p,r^2)$ and $ f_{s}^{\prime}(\boldsymbol x)=G_{2}(\boldsymbol{x}_p,r^2)$ as in (\ref{spherical-value-derivative-G})   and denote $\partial_{p+2}G_j (\boldsymbol{x}_p,r^2)=\frac{\partial G_j}{\partial{r} }(\boldsymbol{x}_p,r^2), j=1,2.$
 Then we have \\
(i) For each $k=1,2,\ldots, \left[\frac{q-1}{2}\right],$
\[\Delta_{\bx}^k f_s^{\prime}(\boldsymbol{x} )=2^k(q-3)(q-5)\cdots(q-2k-1)\partial_{p+2} ^k G_2(\boldsymbol{x}_p,r^2).\]
(ii)
  For each $k=1,2,\ldots,\left[\frac{q+1}{2}\right]$,
\[\Delta_{\bx}^kf_s^\circ(\boldsymbol{x} )=2^k(q-1)(q-3)\cdots(q-2k+1)\partial_{p+2}^kG_1(\boldsymbol{x}_p,r^2).\]
(iii)
$$\Delta_{\bx}f_{s}^{\prime}(\boldsymbol{x} ) =\frac{q-3}{r^{2}}\left(D_{\boldsymbol{x}_p } f_{s}^{\circ}(\boldsymbol{x} )-f_{s}^{\prime}(\boldsymbol{x} )\right). $$
(vi)
\[\Delta_{\bx} f_s^\circ(\boldsymbol{x} ) =(1-q)\overline{D}_{\boldsymbol x_p} f_s^\prime (\boldsymbol{x}).\]
(v)
If $q$ is odd, $ \Delta_{\bx} ^\frac{q+1}{2} f_s^\circ (\boldsymbol{x} )=0$.
\end{theorem}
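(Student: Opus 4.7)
The plan is to read parts (i) and (ii) directly off Lemma \ref{harmonic-Xu-lemma} at $\lambda=0$, and then to translate the first-order cases into the intrinsic forms (iii) and (iv) using the generalized Cauchy--Riemann system \eqref{C-R}. Since $f=\mathcal I(F)\in\mathcal{GSR}(\Omega_D)$, Lemma \ref{GSR-Harmonic} ensures that the components $F_1,F_2$ are real analytic on $D$ and satisfy $\Delta_{\bx'}F_j=0$. This is exactly the Helmholtz equation \eqref{Helmholtz-equation} with $\lambda=0$, and real analyticity trivially covers the hypothesis $f\in\mathcal{GS}^K(\Omega_D)$ with $K=2(q+1)$. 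Specialising Lemma \ref{harmonic-Xu-lemma}(i)--(ii) at $\lambda=0$ therefore yields (i) and (ii) verbatim.

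For parts (iii) and (iv), I would take $k=1$ in (i) and (ii) to get $\Delta_{\bx}f_s^\prime=2(q-3)\,\partial_{p+2}G_2(\bx_p,r^2)$ and $\Delta_{\bx}f_s^\circ=2(q-1)\,\partial_{p+2}G_1(\bx_p,r^2)$, and then rewrite the right-hand sides intrinsically via \eqref{C-R}. Using $F_1(\bx_p,r)=G_1(\bx_p,r^2)$ and $F_2(\bx_p,r)=rG_2(\bx_p,r^2)$, the chain rule gives $\partial_r F_1=2r\,\partial_{p+2}G_1$ and $\partial_r F_2=G_2+2r^2\,\partial_{p+2}G_2=f_s^\prime+2r^2\,\partial_{p+2}G_2$. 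The first equation of \eqref{C-R}, $D_{\bx_p}F_1=\partial_r F_2$, reads $D_{\bx_p}f_s^\circ=f_s^\prime+2r^2\,\partial_{p+2}G_2$, so $2\,\partial_{p+2}G_2=r^{-2}(D_{\bx_p}f_s^\circ-f_s^\prime)$; substitution yields (iii). The second equation of \eqref{C-R} gives $\overline D_{\bx_p}F_2=-\partial_r F_1$, i.e.\ $r\,\overline D_{\bx_p}f_s^\prime=-2r\,\partial_{p+2}G_1$, hence $2\,\partial_{p+2}G_1=-\overline D_{\bx_p}f_s^\prime$; substitution yields (iv).

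Part (v) is an immediate consequence of (ii): taking $k=(q+1)/2$ when $q$ is odd, the product $(q-1)(q-3)\cdots(q-2k+1)$ contains the vanishing factor $q-(q+1)+1=0$, forcing $\Delta_{\bx}^{(q+1)/2}f_s^\circ\equiv 0$.

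There is no genuine obstacle in this proof: all the analytic content is already absorbed into Lemma \ref{harmonic-Xu-lemma}. The only bookkeeping care required is to keep straight the notational convention, namely that $\partial_{p+2}G_j$ denotes the partial derivative of $G_j$ with respect to its last argument, so that the chain rule through the substitution $r\mapsto r^2$ contributes the expected factors of $2r$; and to apply the two equations of \eqref{C-R} to the correct component at each step.
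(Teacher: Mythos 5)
Your proposal is correct and follows essentially the same route as the paper: parts (i)--(ii) are obtained by specialising Lemma \ref{harmonic-Xu-lemma} at $\lambda=0$ via Lemma \ref{GSR-Harmonic}, parts (iii)--(iv) by rewriting the $k=1$ cases through the two equations of \eqref{C-R} exactly as the paper does, and part (v) by the vanishing factor $q-2k+1=0$ at $k=(q+1)/2$. No gaps.
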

\begin{proof}
(i) and (ii) follows directly from by Lemmas \ref{GSR-Harmonic} and \ref{harmonic-Xu-lemma}.

(iii) In view of that $F_1,F_2$ satisfy the generalized Cauchy-Riemann equation in (\ref{C-R}), it holds
$$D_{\boldsymbol{x}_p} F_1(\boldsymbol{x}_{p},r)=\partial_rF_2(\boldsymbol{x}_{p},r)=\partial_r(rG_2(\boldsymbol{x}_p,r^2) ) =G_2(\boldsymbol{x}_p,r^2)+2r^2\partial_{p+2}G_2(\boldsymbol{x}_p,r^2),$$
which gives that
\[\partial_{p+2}G_{2}(\boldsymbol{x}_{p},r^{2})=\frac{1}{2r^{2}}\left(D_{\boldsymbol{x}_p}F_{1}(\boldsymbol{x}_{p},r)-G_{2}(\boldsymbol{x}_{p},r^{2})\right)=\frac{1}{2r^{2}}\left(D_{\boldsymbol{x}_p} f_{s}^{\circ}(\boldsymbol{x} )-f_{s}^{\prime}(\boldsymbol{x} )\right).\]
Combining this    formula with (i), we have
\[ \Delta_{\boldsymbol{x}}f_s^\prime(\boldsymbol{x} )=2(q-3)\partial_{p+2}G_2(\boldsymbol{x}_{p},r^{2})=\frac{q-3}{r^2}\left(D_{\boldsymbol{x}_p} f_{s}^{\circ}(\boldsymbol{x})-f_{s}^{\prime}(\boldsymbol{x} )\right).\]

(iv) In view of that $F_1,F_2$ satisfy the generalized Cauchy-Riemann equation in (\ref{C-R})
$$\overline{D}_{\boldsymbol{x}_p} F_2(\boldsymbol{x}_p,r)=-\partial_r F_1(\boldsymbol{x}_p,r),$$
 we obtain
\[r\overline{D}_{\boldsymbol{x}_p} f_s ^ \prime(\boldsymbol{x} )=-\partial_r (G_1(\boldsymbol{x}_p,r^2) ) =-2r\partial_{p+2} G_1(\boldsymbol{x}_p,r^2),\]
i.e.
\[\partial_{p+2} G_1(\boldsymbol{x}_p,r^2)=-\frac{ 1}{2} \overline{D}_{\boldsymbol{x}_p} f_s ^ \prime(\boldsymbol{x} ).\]
Substituting the above formula into (ii) gives that
\[\Delta_{\bx} f_s^\circ (\boldsymbol{x} )=2(q-1)\partial_{p+2} G_1(\boldsymbol{x}_p,r^2)=(1-q)\overline{D}_{\boldsymbol x_p} f_s^\prime (\boldsymbol{x} ).\]
(v)
For odd $q$, setting $k=\frac{q+1}{2}$  in (ii),  (v) can be obtained.

The proof is complete.
\end{proof}
\begin{remark}\rm{
When $(p,q)=(0,n)$, Theorem \ref{spherical-derivative-value} is consistent with of Theorems 3.4.1  and   3.4.6  in \cite{Perotti-19}  for slice regular functions over the Clifford algebra $\mathbb R_n$.}
\end{remark}

In \cite{Xu-Sabadini-2}, a Fueter-Sce  theorem for generalized partial-slice regular functions was obtained.
\begin{theorem}\label{Fueter-Sce-theorem}
Let     $f(\bx)=F_1(\bx')+\underline{\omega} F_2(\bx') \in \mathcal{GSR}(\Omega_{D})$.  Then, for odd $q\in \mathbb{N}$,
the function
 $$\tau_{q}f(\bx):=\frac{1}{(q-1)!!}\Delta_{\bx}^{\frac{q-1}{2}}f(\bx)$$
  is monogenic in $\Omega_{D}$.
 \end{theorem}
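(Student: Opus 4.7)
The plan is to cast $\tau_q f$ in a Vekua-type form $H_1 + \underline{\bx}_q H_2$ for explicit real-analytic $H_1, H_2$ built from $f$, and then verify $D_{\bx}(\tau_q f)=0$ by a direct Clifford-algebra calculation driven by the generalized Cauchy--Riemann equations (\ref{C-R}).

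Starting from the decomposition $f = f_s^\circ + \underline{\bx}_q f_s' = G_1(\bx_p, r^2) + \underline{\bx}_q G_2(\bx_p, r^2)$ of (\ref{spherical-value-derivative})--(\ref{spherical-value-derivative-G}), Theorem~\ref{spherical-derivative-value}(ii) at $k=(q-1)/2$ already yields
\[
\Delta_{\bx}^{(q-1)/2} f_s^\circ = 2^{(q-1)/2}(q-1)!!\,\partial_{p+2}^{(q-1)/2} G_1.
\]
For the other summand I would prove by induction on $k \in \{0,1,\ldots,(q-1)/2\}$ that
\[
\Delta_{\bx}^{k}(\underline{\bx}_q G_2) = 2^k (q-1)(q-3)\cdots(q-2k+1)\, \underline{\bx}_q\, \partial_{p+2}^{k} G_2,
\]
using two ingredients: the universal commutation $\Delta_{\bx}(\underline{\bx}_q H) = \underline{\bx}_q(\Delta_{\bx} H + 4\partial_{p+2} H)$ valid for any $H=H(\bx_p, r^2)$ (coming from $\Delta_{\bx}\underline{\bx}_q=0$ together with $D_{\underline{\bx}_q} H = 2\underline{\bx}_q\partial_{p+2} H$), and the iterated auxiliary relation $\Delta_{\bx}\partial_{p+2}^{j}G_2 = 2(q-2j-3)\partial_{p+2}^{j+1}G_2$ lifted from the argument in the proof of Lemma~\ref{harmonic-Xu-lemma}(i). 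At the terminal index $k=(q-1)/2$ the product is exactly $(q-1)!!$, so the two halves combine into
\[
\tau_q f = 2^{(q-1)/2}\bigl(H_1 + \underline{\bx}_q H_2\bigr), \qquad H_j := \partial_{p+2}^{(q-1)/2} G_j.
\]

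Next I would split $D_{\bx} = D_{\bx_p} + D_{\underline{\bx}_q}$ and apply it term by term. Beyond $D_{\underline{\bx}_q} H_j = 2\underline{\bx}_q \partial_{p+2} H_j$, the computation needs $D_{\underline{\bx}_q}(\underline{\bx}_q H_2) = -qH_2 - 2r^2 \partial_{p+2} H_2$ (from $\underline{\bx}_q^2 = -r^2$) and the sign-sensitive identity $D_{\bx_p}(\underline{\bx}_q H_2) = \underline{\bx}_q \overline{D}_{\bx_p} H_2$: the summand $e_0\partial_{x_0}$ commutes with $\underline{\bx}_q$ while each $e_i\partial_{x_i}$ with $1\le i\le p$ anticommutes, producing the conjugate operator on the right. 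Differentiating (\ref{C-R}), after rewriting them in terms of $(G_1, G_2)$, exactly $(q-1)/2$ times with respect to the second argument of the $G_j$ gives
\[
D_{\bx_p} H_1 = q H_2 + 2r^2 \partial_{p+2} H_2, \qquad \overline{D}_{\bx_p} H_2 = -2 \partial_{p+2} H_1.
\]
Inserting these, the first identity eliminates the non-$\underline{\bx}_q$ terms, and the residual $\underline{\bx}_q(2\partial_{p+2} H_1 + \overline{D}_{\bx_p} H_2)$ vanishes by the second.

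The principal obstacle is the induction in the first step: the coefficient $(q-1)(q-3)\cdots(q-2k+1)$ governing $\Delta^k(\underline{\bx}_q G_2)$ differs from the one $(q-3)(q-5)\cdots(q-2k-1)$ produced by Lemma~\ref{harmonic-Xu-lemma}(i) for $G_2$ alone by exactly a one-factor shift at each end, and it is this precise shift that synchronizes the two summands of $\tau_q f$ under a common $(q-1)!!$. A secondary but essential care point is the sign-tracking in the final step: the tempting $D_{\bx_p}(\underline{\bx}_q H_2) = -\underline{\bx}_q D_{\bx_p} H_2$ is wrong because $e_0=1$ commutes with $\underline{\bx}_q$, and only the correct version involving $\overline{D}_{\bx_p}$ pairs with the second Cauchy--Riemann relation to close the calculation.
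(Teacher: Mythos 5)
Your proposal is correct, and I verified the key computations: the commutation $\Delta_{\bx}(\underline{\bx}_q H)=\underline{\bx}_q(\Delta_{\bx}H+4\partial_{p+2}H)$ for $H=H(\bx_p,r^2)$, combined with the auxiliary relation $\Delta_{\bx}\partial_{p+2}^{j}G_2=2(q-2j-3)\partial_{p+2}^{j+1}G_2$ from Lemma \ref{harmonic-Xu-lemma}, does give $\Delta_{\bx}^{k}(\underline{\bx}_qG_2)=2^k(q-1)(q-3)\cdots(q-2k+1)\,\underline{\bx}_q\partial_{p+2}^kG_2$, whose terminal coefficient at $k=\frac{q-1}{2}$ matches the $(q-1)!!$ coming from Theorem \ref{spherical-derivative-value}(ii) for the $f_s^\circ$ half; and the $\frac{q-1}{2}$-fold $t$-derivatives of the Cauchy--Riemann system written in the variables $(\bx_p,t)$, $t=r^2$, indeed produce $D_{\bx_p}H_1=qH_2+2r^2\partial_{p+2}H_2$ and $\overline{D}_{\bx_p}H_2=-2\partial_{p+2}H_1$, which annihilate the two components of $D_{\bx}(H_1+\underline{\bx}_qH_2)$. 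However, your route is genuinely different from the one the paper takes. The paper does not recompute $\Delta_{\bx}^{(q-1)/2}f$ at all: in Theorem \ref{Fueter-Sce-1} it invokes Proposition \ref{relation} together with $D_{\underline{\omega}}f=0$ to get the pointwise identity $D_{\bx}f=(1-q)f_s^{\prime}$, commutes $D_{\bx}$ past $\Delta_{\bx}^{(q-1)/2}$, and then observes that $\Delta_{\bx}^{(q-1)/2}f_s^{\prime}=0$ because the coefficient string $(q-3)(q-5)\cdots(q-2k-1)$ in Theorem \ref{spherical-derivative-value}(i) contains the factor $q-2k-1=0$ at $k=\frac{q-1}{2}$ --- a two-line argument once those ingredients are in place. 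What your longer computation buys in exchange is an explicit closed form $\tau_qf=2^{(q-1)/2}\bigl(\partial_{p+2}^{(q-1)/2}G_1+\underline{\bx}_q\,\partial_{p+2}^{(q-1)/2}G_2\bigr)$, exhibiting $\tau_qf$ as a function of axial (Vekua) type and explaining why the normalization $\frac{1}{(q-1)!!}$ is the natural one; this is closer in spirit to the classical Fueter--Sce--Qian proofs and to the treatment in the cited reference. Your care points are well taken, in particular the identity $D_{\bx_p}(\underline{\bx}_qH_2)=\underline{\bx}_q\overline{D}_{\bx_p}H_2$ (the $e_0$ term commutes while $e_1,\dots,e_p$ anticommute with $\underline{\bx}_q$), which is exactly the sign mechanism that makes the conjugated operator appear in the second Cauchy--Riemann relation.
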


As an application  of Theorem \ref{spherical-derivative-value}, two  generalized versions of the Fueter-Sce theorem can be obtained. To   this end, we recall a technical lemma from \cite{Xu-Sabadini}.
 \begin{proposition}\label{relation}
If $f \in {\mathcal{GS}}^{1}(\Omega_{D}) $, then
 $$(D_{\bx}-{D_{\underline{\omega}})}f(\bx) =(1-q) f_s^\prime (\boldsymbol{x}), \quad \boldsymbol{x}\in\Omega_{D}\setminus \mathbb{R}^{p+1}.$$
\end{proposition}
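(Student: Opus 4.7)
The plan is to split the operator on the left-hand side as $D_{\bx}-D_{\underline{\omega}}=D_{\underline{\bx}_q}-\underline{\omega}\partial_{r}$, apply it to the canonical decomposition $f(\bx)=F_{1}(\bx')+\underline{\omega}F_{2}(\bx')$, and verify that three of the resulting four contributions cancel, leaving precisely $(1-q)f_{s}^{\prime}$. Throughout I work at a point with $\underline{\bx}_q\neq 0$, so that $r=|\underline{\bx}_q|$ and $\underline{\omega}=\underline{\bx}_q/r$ are smooth functions of $\underline{\bx}_q$, and $F_{1},F_{2}$ depend only on $(\bx_{p},r)$.

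First I would treat the two easy pieces. Since $\partial_{x_{i}}r=x_{i}/r$ for $p+1\leq i\leq p+q$, the chain rule yields
\[
D_{\underline{\bx}_q}F_{1}(\bx_{p},r)=\sum_{i=p+1}^{p+q}e_{i}\,\frac{x_{i}}{r}\,\partial_{r}F_{1}=\underline{\omega}\,\partial_{r}F_{1}.
\]
On the other hand $\underline{\omega}$ is constant along the slice plane $\bx_{p}+r\underline{\omega}$, so $\underline{\omega}\,\partial_{r}(\underline{\omega}F_{2})=\underline{\omega}^{2}\partial_{r}F_{2}=-\partial_{r}F_{2}$. Thus the term $\underline{\omega}\partial_{r}F_{1}$ coming from $D_{\underline{\bx}_q}F_{1}$ cancels $\underline{\omega}\partial_{r}F_{1}$ from $\underline{\omega}\partial_{r}f$, and the $-\partial_{r}F_{2}$ from $\underline{\omega}\partial_{r}(\underline{\omega}F_{2})$ is the only obstacle to matching the $-\partial_{r}F_{2}$ that will emerge from $D_{\underline{\bx}_q}(\underline{\omega}F_{2})$.

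The central computation is therefore $D_{\underline{\bx}_q}(\underline{\omega}F_{2})$. Writing $\underline{\omega}F_{2}=\sum_{j=p+1}^{p+q}e_{j}(x_{j}/r)F_{2}$ and using Leibniz with $\partial_{x_{i}}(x_{j}/r)=\delta_{ij}/r-x_{i}x_{j}/r^{3}$ and $\partial_{x_{i}}F_{2}=(x_{i}/r)\partial_{r}F_{2}$, I obtain a double sum
\[
D_{\underline{\bx}_q}(\underline{\omega}F_{2})=\sum_{i,j=p+1}^{p+q}e_{i}e_{j}\!\left[\Bigl(\frac{\delta_{ij}}{r}-\frac{x_{i}x_{j}}{r^{3}}\Bigr)F_{2}+\frac{x_{i}x_{j}}{r^{2}}\,\partial_{r}F_{2}\right].
\]
The two Clifford identities $\sum_{i}e_{i}^{2}=-q$ and $\sum_{i,j}e_{i}e_{j}x_{i}x_{j}=\underline{\bx}_q^{\,2}=-r^{2}$ then collapse the three pieces of the sum to $-qF_{2}/r+F_{2}/r-\partial_{r}F_{2}=-(q-1)F_{2}/r-\partial_{r}F_{2}$.

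Assembling the four contributions,
\[
(D_{\bx}-D_{\underline{\omega}})f=\underline{\omega}\partial_{r}F_{1}-\frac{q-1}{r}F_{2}-\partial_{r}F_{2}-\underline{\omega}\partial_{r}F_{1}+\partial_{r}F_{2}=-\frac{q-1}{r}F_{2},
\]
and recognising $(1/r)F_{2}=f_{s}^{\prime}(\bx)$ finishes the proof. The only real obstacle is the bookkeeping in the middle step: one must resist the temptation to move $\underline{\omega}$ through $e_{i}$ and instead expand $\underline{\omega}$ into its components so that the quadratic Clifford identities can be applied. Once those identities are in place, all cancellations are automatic.
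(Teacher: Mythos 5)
Your computation is correct. Note that the paper itself gives no proof of this proposition --- it is recalled as a known technical lemma from the reference [Xu-Sabadini] --- so there is nothing to compare against internally; your argument is the standard direct verification one would expect there. The key steps all check out: $D_{\bx}-D_{\underline{\omega}}=D_{\underline{\bx}_q}-\underline{\omega}\partial_r$, the chain rule giving $D_{\underline{\bx}_q}F_1=\underline{\omega}\partial_r F_1$, and the collapse of the double sum via $\sum_i e_i^2=-q$ and $\underline{\bx}_q^2=-r^2$ to yield $-\tfrac{q-1}{r}F_2-\partial_r F_2$, after which the cancellations produce $(1-q)\tfrac{1}{r}F_2=(1-q)f_s^{\prime}(\bx)$ as claimed.
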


Now we can   make an  enhancement of  Theorem   \ref{Fueter-Sce-theorem} as follows.
\begin{theorem}\label{Fueter-Sce-1}
Let $q\in \mathbb{N}$ be  odd  and $f\in \mathcal{GSR} (\Omega_{D})$. Then \\
(i)  The generalized Fueter-Sce theorem holds:
 \[ D_{\boldsymbol{x}}  \Delta_{\bx} ^{\frac{q-1}{2}} f(\boldsymbol{x} )=(1-q) \Delta_{\bx} ^{\frac{q-1}{2}} f_{s}^\prime(\boldsymbol{x} )=0.\]
 (ii) $ \Delta_{\bx} ^{\frac{q+1}{2}}f(\boldsymbol{x} )=0$, i.e. every generalized partial-slice regular function  is polyharmonic of degree $\frac{q+1}{2}$.
 \end{theorem}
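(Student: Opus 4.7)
The plan is to derive both parts from a single master identity combining Proposition \ref{relation} with the vanishing already encoded in Theorem \ref{spherical-derivative-value}(i).

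First, since $f\in\mathcal{GSR}(\Omega_D)$ has components $F_1,F_2$ satisfying the generalized Cauchy-Riemann equations (\ref{C-R}), a direct verification shows that the slice operator $D_{\underline{\omega}}$ annihilates $f_{\underline{\omega}}$ on each slice $\Omega_{\underline{\omega}}$. Substituting this vanishing into the identity $(D_{\bx}-D_{\underline{\omega}})f(\bx)=(1-q)f_s'(\bx)$ of Proposition \ref{relation} collapses it to
$$D_{\bx} f(\bx)=(1-q)\,f_s'(\bx)$$
on $\Omega_D\setminus\mathbb{R}^{p+1}$, and by the continuous extension of $f_s'$ (see the discussion following Definition \ref{GSM-GSR}) on all of $\Omega_D$.

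Applying $\Delta_{\bx}^{(q-1)/2}$ to both sides and using that the scalar Laplacian commutes with the Dirac operator yields the first equality of (i). For the second equality, I invoke Theorem \ref{spherical-derivative-value}(i) at the index $k=(q-1)/2$, which is admissible because $q$ is odd, so $k$ is an integer equal to $[(q-1)/2]$. The resulting prefactor is $2^k(q-3)(q-5)\cdots(q-2k-1)$, whose final factor $q-2k-1=0$ forces the whole product to vanish. Hence $\Delta_{\bx}^{(q-1)/2}f_s'(\bx)\equiv 0$, completing (i).

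For (ii), part (i) tells me that $g:=\Delta_{\bx}^{(q-1)/2}f$ is left monogenic on $\Omega_D$. Since the Dirac operator factorizes the Laplacian as $\Delta_{\bx}=\overline{D}_{\bx} D_{\bx}$, every monogenic function is harmonic, giving
$$\Delta_{\bx}^{(q+1)/2} f=\Delta_{\bx}\,g=\overline{D}_{\bx}(D_{\bx}g)=0,$$
which is exactly the asserted polyharmonicity of degree $(q+1)/2$. The argument is essentially mechanical once the collapse $D_{\bx}f=(1-q)f_s'$ is in hand; the only mild subtlety is extending that collapse across $\mathbb{R}^{p+1}\cap\Omega_D$, and I expect this to be no more than a continuity check using the real-analyticity provided by Lemma \ref{GSR-Harmonic}.
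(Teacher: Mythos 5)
Your proposal is correct and follows essentially the same route as the paper: it combines Proposition \ref{relation} (with $D_{\underline{\omega}}f=0$ from regularity) to get $D_{\bx}f=(1-q)f_s'$, applies Theorem \ref{spherical-derivative-value}(i) at $k=\frac{q-1}{2}$ where the factor $q-2k-1=0$ kills the product, and then uses the factorization $\Delta_{\bx}=\overline{D}_{\bx}D_{\bx}$ for part (ii). Your explicit remarks on why the prefactor vanishes and on extending the identity across $\Omega_D\cap\mathbb{R}^{p+1}$ are details the paper leaves implicit, but the argument is the same.
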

 \begin{proof}
(i)  Let $f\in \mathcal{GSR} (\Omega_{D})$.  By Proposition \ref{relation} and \ref{GSM-GSR}, it holds that
$$  D_{\boldsymbol{x} }f(\boldsymbol{x} )=(1-q)  f_{s}^\prime(\boldsymbol{x} ).$$
Together  with Theorem \ref{spherical-derivative-value} (i) for $k=\frac{q-1}{2}$, where $q\in \mathbb{N}$ is  odd,
we obtain
 \[ D_{\boldsymbol{x}}  \Delta_{\bx} ^{\frac{q-1}{2}} f(\boldsymbol{x} )= \Delta_{\bx} ^{\frac{q-1}{2}} D_{\boldsymbol{x} }f(\boldsymbol{x} )=(1-q) \Delta_{\bx} ^{\frac{q-1}{2}} f_{s}^\prime(\boldsymbol{x} )=0.\]
(ii) From the conclusion in (i), we get
\[\Delta_{\bx}^{\frac{q+1}{2}}f(\boldsymbol{x} )=\Delta_{\bx}  \Delta_{\bx} ^{\frac{q-1}{2}}f(\boldsymbol{x} )=\overline{D}_{\boldsymbol{x} }D_{\boldsymbol{x}} \Delta_{\bx}^{\frac{q-1}{2}}f(\boldsymbol{x} )=0.\]
Therefore, every generalized partial-slice regular function $f$ is polyharmonic of degree  $\frac{q+1}{2}$.
The proof is complete.
 \end{proof}
Now we consider the second   enhancement of    the Fueter-Sce theorem for generalized partial-slice  functions satisfying a Vekua-type system, which is closely related to   eigenvalue problems.  Interested readers can refer to \cite{Xu-Zhenyuan} for eigenvalue problems  within the classical monogenic functions over Clifford  algebras and \cite{Krau}   within  slice-regular functions over quaternions.   The following result can be viewed as  a high dimensional   analogue of the Fueter's theorem for one class of pseudoanalytic functions in \cite[Theorem 1.2]{Lian}.
Note that in  the following theorem the function $f$ under   consideration does not depend on the first   variable  $\bx_0$, whence   we write it as $f(\underline{\bx})$.
\begin{theorem}\label{Futer-Sce-Vekua}
Let   $\underline{D}\subseteq \mathbb{R}^{p+1}$ be  a domain, invariant under the reflection  of the $(p+1)$-th variable,
 and $f(\underline{\bx})=F_1(\underline{\bx}')+\underline{\omega} F_2(\underline{\bx}') \in {\mathcal{GS}}^{1}(\Omega_{\underline{D}})$. Given a   constant $\lambda\in \mathbb{R}$, if $F_1, F_2$ satisfy  the Vekua-type system
 \begin{eqnarray}\label{C-R-Vekua}
 \left\{
\begin{array}{ll}
D_{\underline{\bx}_p}  F_1(\underline{\bx}')- \partial_{r} F_2(\underline{\bx}')=\lambda  F_1(\underline{\bx}'),
\\
 - D_{\underline{\bx}_p}  F_2(\underline{\bx}')+ \partial_{r} F_1(\underline{\bx}')= \lambda  F_2(\underline{\bx}'),
\end{array}  \quad \underline{\bx}'=(\underline{\bx}_p,r) \in \underline{D},
\right.
\end{eqnarray}
then
\begin{equation}\label{Fueter-Sce-eigenvalue}
(D_{\underline{\boldsymbol{x}}} -\lambda) (\Delta_{\underline{\bx}}+\lambda^{2}) ^{\frac{q-1}{2}} f(\underline{\boldsymbol{x}} )=0.
\end{equation}
\end{theorem}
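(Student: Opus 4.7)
The plan is to mimic the proof of Theorem \ref{Fueter-Sce-1}(i), but with the Laplacian replaced by the Helmholtz operator $\Delta_{\underline{\bx}} + \lambda^2$, invoking the Helmholtz variant of Lemma \ref{harmonic-Xu-lemma} pointed out in the subsequent remark. The pivotal identity to establish is
\[
(D_{\underline{\bx}} - \lambda) f(\underline{\bx}) = (1-q)\, f_s^{\prime}(\underline{\bx}), \qquad \underline{\bx} \in \Omega_{\underline{D}} \setminus \mathbb{R}^p,
\]
after which applying $(\Delta_{\underline{\bx}} + \lambda^2)^{(q-1)/2}$ to both sides and commuting it past $D_{\underline{\bx}} - \lambda$ will produce (\ref{Fueter-Sce-eigenvalue}), provided this operator annihilates $f_s^{\prime}$.

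For the pivotal identity, I would first compute $D_{\underline{\omega}} f$ on each slice. Expanding $(D_{\underline{\bx}_p} + \underline{\omega} \partial_r)(F_1 + \underline{\omega} F_2)$ and using $e_i \underline{\omega} = -\underline{\omega} e_i$ for $1 \leq i \leq p$ together with $\underline{\omega}^2 = -1$, the four resulting terms regroup into $(D_{\underline{\bx}_p} F_1 - \partial_r F_2) + \underline{\omega}(-D_{\underline{\bx}_p} F_2 + \partial_r F_1)$, which by (\ref{C-R-Vekua}) equals $\lambda(F_1 + \underline{\omega} F_2) = \lambda f$. Since $f$ is independent of $x_0$, it may be regarded as a partial-slice function on the trivial $x_0$-extension of $\Omega_{\underline{D}}$, and Proposition \ref{relation} supplies $(D_{\underline{\bx}} - D_{\underline{\omega}}) f = (1-q) f_s^{\prime}$ on the complement of $\mathbb{R}^p$. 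Combining the two yields the pivotal identity.

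Next, I would verify that $F_1, F_2$ solve the Helmholtz equation $(\Delta_{\underline{\bx}'} + \lambda^2) F_j = 0$ on $\underline{D}$: apply $D_{\underline{\bx}_p}$ to the first equation of (\ref{C-R-Vekua}), use $D_{\underline{\bx}_p}^2 = -\Delta_{\underline{\bx}_p}$, and substitute the second equation to eliminate $D_{\underline{\bx}_p} F_2$; the $\lambda$-linear cross terms cancel, leaving $-(\Delta_{\underline{\bx}_p} + \partial_r^2) F_1 = \lambda^2 F_1$, and symmetrically for $F_2$. With this in hand, the Helmholtz variant of Lemma \ref{harmonic-Xu-lemma}(i) gives
\[
(\Delta_{\underline{\bx}} + \lambda^2)^k f_s^{\prime} = 2^k (q-3)(q-5) \cdots (q-2k-1)\, \partial_{p+1}^k G_2(\underline{\bx}_p, r^2),
\]
and at $k = (q-1)/2$ (admissible since $q$ is odd) the last factor $q-2k-1$ vanishes, forcing the right-hand side to be zero. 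Hence
\[
(D_{\underline{\bx}} - \lambda)(\Delta_{\underline{\bx}} + \lambda^2)^{(q-1)/2} f = (1-q)(\Delta_{\underline{\bx}} + \lambda^2)^{(q-1)/2} f_s^{\prime} = 0
\]
off the real axis, and real-analyticity of the components extends this to all of $\Omega_{\underline{D}}$.

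The main obstacle is that Lemma \ref{harmonic-Xu-lemma} is stated for $\bx' \in \mathbb{R}^{p+2}$ (including the $x_0$-coordinate), while here $\underline{\bx}' \in \mathbb{R}^{p+1}$. A clean way around this is to embed $f$ as a partial-slice function on $\mathbb{R}^{p+q+1}$ that is trivially constant in $x_0$; then $\Delta_{\bx} f = \Delta_{\underline{\bx}} f$ and $D_{\bx} f = D_{\underline{\bx}} f$, so the lemma applies verbatim. Alternatively, the inductive proof of Lemma \ref{harmonic-Xu-lemma} only exploits the $\underline{\bx}_q$-structure, so the same induction runs unchanged in the reduced dimension.
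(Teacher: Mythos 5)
Your route is essentially the paper's: verify that the Vekua system is equivalent to the slicewise eigenvalue equation $D_{\underline{\omega}}f=\lambda f$, combine this with Proposition \ref{relation} to get $(D_{\underline{\bx}}-\lambda)f=(1-q)f_s^{\prime}$, derive the Helmholtz equation $(\Delta_{\underline{\bx}'}+\lambda^2)F_j=0$ from the system, and then kill $(\Delta_{\underline{\bx}}+\lambda^2)^{(q-1)/2}f_s^{\prime}$ via the vanishing factor $q-2k-1$ at $k=\frac{q-1}{2}$ in the Helmholtz variant of Lemma \ref{harmonic-Xu-lemma}. Your explicit derivations of the eigenvalue equation and of the Helmholtz equation (which the paper only asserts) are correct, and your handling of the dimension drop from $\mathbb{R}^{p+2}$ to $\mathbb{R}^{p+1}$ by a trivial $x_0$-extension is sound.

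There is one genuine gap: regularity. The hypothesis is only $f\in\mathcal{GS}^{1}(\Omega_{\underline{D}})$, i.e.\ $F_1,F_2\in C^1$, yet your argument differentiates the Vekua system once more to obtain the Helmholtz equation, and Lemma \ref{harmonic-Xu-lemma} requires class $C^{K}$ with $K=2(q+1)$ to make sense of $(\Delta_{\underline{\bx}}+\lambda^2)^{k}$. At the very end you invoke ``real-analyticity of the components'' to extend the identity across $\underline{\bx}_q=0$, but nowhere do you establish that the components are real analytic -- under the stated hypotheses this is not free. The paper closes exactly this hole by setting $h(\bx)=e^{-\lambda x_0}f(\underline{\bx})$ and observing that $h$ is slicewise monogenic on $\mathbb{R}\times\underline{D}\subseteq\mathbb{R}^{p+2}$, hence real analytic, which transfers to $f$ and legitimizes all higher derivatives. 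You need this (or an equivalent elliptic-regularity argument for the Vekua system) before the rest of your proof can run.
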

\begin{proof}
Let $f(\underline{\bx})=F_1(\underline{\bx}')+\underline{\omega} F_2(\underline{\bx}') \in {\mathcal{GS}}^{1}(\Omega_{\underline{D}})$.  By direct computations,  (\ref{C-R-Vekua}) holds  if and only
if $f(\underline{\bx})$ satisfies the eigenvalue problem
\begin{equation*}\label{eigenvalue}
(D_{\underline{\bx}_p}+\underline{\omega}\partial_{r}) f(\underline{\bx}_p+r\underline{\omega})=
\lambda f(\underline{\bx}_p+r\underline{\omega}),\quad \underline{\bx}_p+r\underline{\omega} \in \Omega_{\underline{D}},
\end{equation*} for all $\underline{\omega}\in \mathbb{S}$.
Set $h(\bx)=e^{-\lambda \bx_0}f(\underline{\bx})$. Then it holds that
$$(D_{\bx_p}+\underline{\omega}\partial_{r}) h(\bx_p+r\underline{\omega})=e^{-\lambda \bx_0}(D_{\underline{\bx}_p}+\underline{\omega}\partial_{r}- \lambda) f(\underline{\bx}_p+r\underline{\omega})=0, $$
which says that the function $h$ can be viewed slice-by-slice to be monogenic   in  $\mathbb{R}\times \underline{D}\subseteq \mathbb{R}^{p+2}$.
In particular, $h$ is real analytic on $\mathbb{R}\times \underline{D}$ and  hence  $f$  is real analytic on $\Omega_{\underline{D}}$, which guarantees the legitimacy of higher differentiation in (\ref{Fueter-Sce-eigenvalue}).

Furthermore,  (\ref{C-R-Vekua})    implies that $F_1, F_2$ satisfy both the Helmholtz equation, respectively,
\begin{equation*}\label{Helmholtz-equation-Vekua-cons}
(\Delta_{\underline{\bx}'}+\lambda^{2})F_j(\underline{\bx}') =0,\quad j=1,2.\end{equation*}
By using the  method  as in Theorem \ref{Fueter-Sce-1}, one can obtain
$$(D_{\underline{\boldsymbol{x}}} -\lambda) (\Delta_{\underline{\bx}}+\lambda^{2}) ^{\frac{q-1}{2}} f(\underline{\boldsymbol{x}} )=(1-q) (\Delta_{\underline{\bx}}+\lambda^{2}) ^{\frac{q-1}{2}} f_{s}^\prime(\underline{\boldsymbol{x}} )=0,$$
which completes the proof.
\end{proof}

\begin{remark}
 Compared with  \cite[Theorem 1.2]{Lian}, our result in  Theorem \ref{Futer-Sce-Vekua} does not restrict the associated functions $F_1$ and $F_2$ to be  real-valued, but  general $\mathbb{R}_{p+q}$-valued.
 \end{remark}

Now, let us go back to the main line of establishing the Almansi-type decomposition for generalized partial-slice
regular functions.  To formulate the desired Almansi-type theorem, we need   to reinterpret the notion of  \textit{zonal function with pole $\eta$}. Specially, choosing $\eta=(1,0,\ldots,0)\in \mathbb{R}^{n+1}$ in (\ref{zonal}), viewed as the paravector $1$,  the orthogonal transformation  $ T\in O(n+1)$  such that $T\eta=\eta$ always takes the form of $\begin{pmatrix} 1 \ \ \ \ \\ \ \ \ T_{n}\end{pmatrix}$ with $ T_{n}\in O(n)$. Hence, the zonal function $f$ with pole $1$, defined in $\mathbb{R}^{n+1}$, satisfying
$$f(x_{0}+\underline{x})=f(x_{0},|\underline{x}|),$$
has axial symmetry with respect to the real axis. Equivalently,
$$f(x_{0}+|\underline{x}|I)=f(x_{0} +|\underline{x}|J), \quad \forall \ I, J \in \mathbb{S}^{n-1}.$$
 Based on this observation,  we call  the function $f$ is symmetric with respect to the real space $\mathbb{R}^{p+1}$,
if  $$f(\bx_{p}+|\underline{\bx}_{q}|I)=f(\bx_{p}+|\underline{\bx}_{q}|J), \quad \forall \ I, J \in \mathbb{S}.$$
i.e.
$$f(\bx_{p}+\underline{\bx}_{q})=f(\bx_{p},|\underline{\bx}_{q}|),$$
or
$$f \circ T = f,\quad \forall\ T=\begin{pmatrix} E_{p+1} \ \ \ \   \\ \ \ \ \ \  \ \  T_{q}\end{pmatrix},$$
 where $E_{p+1}$ is the unit matrix in $\mathbb{R}^{p+1}$ and $ T_{q}\in O(q)$.

Besides, we need the following    lemma.
\begin{lemma}\label{poly-harmonic-lemma}
Let $m\in \mathbb{N}$. For any   $C^{2m}$ function  $h: \mathbb{R}^{p+q+1} \rightarrow \mathbb{R}_{p+q}$, we have
$$\Delta_{\bx}^{m}(\overline{\bx}_p h(\bx))=2m \Delta_{\bx}^{m-1}   \overline{D}_{\bx_p}h(\bx)+   \overline{\bx}_p  (\Delta_{\bx}^{m}h(\bx)).$$
\end{lemma}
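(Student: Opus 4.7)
The plan is to prove this lemma by induction on $m$, with the base case $m=1$ handled by a direct Leibniz-rule calculation, and the inductive step reduced to the $m=1$ case via the commutativity of constant-coefficient differential operators.

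For the base case $m=1$, the key observation is that $\overline{\bx}_p = x_0 - \sum_{i=1}^p e_i x_i$ is linear in the variables $x_0, \ldots, x_p$ and independent of $x_{p+1}, \ldots, x_{p+q}$; in particular all of its second-order partial derivatives vanish. Writing $\Delta_{\bx} = \sum_{j=0}^{p+q} \partial_{x_j}^2$ and applying the Leibniz rule to each $\partial_{x_j}^2(\overline{\bx}_p h)$, the pure Laplacian term on $\overline{\bx}_p$ drops out and only the cross terms survive:
\begin{equation*}
\Delta_{\bx}(\overline{\bx}_p h) = \overline{\bx}_p \Delta_{\bx} h + 2\sum_{j=0}^{p} (\partial_{x_j}\overline{\bx}_p)\, \partial_{x_j} h.
\end{equation*}
Since $\partial_{x_0}\overline{\bx}_p = 1$ and $\partial_{x_j}\overline{\bx}_p = -e_j$ for $1 \le j \le p$, the sum collapses to $\partial_{x_0} h - \sum_{j=1}^p e_j \partial_{x_j} h = \overline{D}_{\bx_p} h$. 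Hence
\begin{equation*}
\Delta_{\bx}(\overline{\bx}_p h) = \overline{\bx}_p \Delta_{\bx} h + 2 \overline{D}_{\bx_p} h,
\end{equation*}
which is the statement for $m=1$.

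For the inductive step, I assume the identity for $m-1$ and apply one more Laplacian. Because $\Delta_{\bx}$ and $\overline{D}_{\bx_p}$ are constant-coefficient operators acting on $\mathbb{R}_{p+q}$-valued functions from the left with real coefficients, they commute with each other and with themselves. Applying $\Delta_{\bx}$ to the inductive hypothesis and then using the $m=1$ identity with $h$ replaced by $\Delta_{\bx}^{m-1} h$ gives
\begin{equation*}
\Delta_{\bx}^m(\overline{\bx}_p h) = 2(m-1)\Delta_{\bx}^{m-1}\overline{D}_{\bx_p} h + 2\overline{D}_{\bx_p}\Delta_{\bx}^{m-1} h + \overline{\bx}_p \Delta_{\bx}^m h,
\end{equation*}
and by commuting $\overline{D}_{\bx_p}$ past $\Delta_{\bx}^{m-1}$ in the middle term, the two $\Delta_{\bx}^{m-1}\overline{D}_{\bx_p} h$ contributions combine into $2m\,\Delta_{\bx}^{m-1}\overline{D}_{\bx_p} h$, yielding the desired formula.

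There is no real obstacle here — the only subtlety worth flagging is the ordering of the Clifford factors: because $h$ is $\mathbb{R}_{p+q}$-valued and the $e_j$ do not commute with $h$, one must keep $\partial_{x_j}\overline{\bx}_p$ on the left of $\partial_{x_j} h$ throughout the Leibniz expansion, which is exactly what produces $\overline{D}_{\bx_p} h$ (with the correct sign from Clifford conjugation) rather than, say, $h\, \overline{D}_{\bx_p}$ or $D_{\bx_p} h$. Once this left-multiplication convention is respected, the computation is entirely routine.
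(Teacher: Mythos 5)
Your proof is correct and follows essentially the same route as the paper: both establish the product rule $\Delta_{\bx}(\overline{\bx}_p h)=\overline{\bx}_p\Delta_{\bx}h+2\overline{D}_{\bx_p}h$ and then iterate it using the commutativity $\overline{D}_{\bx_p}\Delta_{\bx}=\Delta_{\bx}\overline{D}_{\bx_p}$ to accumulate the factor $2m$. If anything, your treatment is slightly more complete, since the paper states the $m=1$ identity without the explicit Leibniz computation that you supply.
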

\begin{proof}Observing that
$$\Delta_{\bx} (\overline{\bx}_p h(\bx))= 2 \overline{D}_{\bx_p}h(\bx)+\overline{\bx}_p  (\Delta_{\bx}h(\bx)), $$
we obtain
 \begin{eqnarray*}
\Delta_{\bx}^{m}(\overline{\bx}_p h(\bx))
&=&\Delta_{\bx}^{m-1} (2 \overline{D}_{\bx_p}h(\bx)+\overline{\bx}_p  (\Delta_{\bx}h(\bx)) )\\
&=& 2 \Delta_{\bx}^{m-1}   \overline{D}_{\bx_p}h(\bx)+ \Delta_{\bx}^{m-1}( \overline{\bx}_p  (\Delta_{\bx}h(\bx)) )
 \\
&=& 2 \Delta_{\bx}^{m-1}   \overline{D}_{\bx_p}h(\bx)+ 2 \Delta_{\bx}^{m-2}   \overline{D}_{\bx_p} \Delta_{\bx} h(\bx) + \Delta_{\bx}^{m-2}( \overline{\bx}_p ( \Delta_{\bx}^{2}h(\bx)))\\
&=&4 \Delta_{\bx}^{m-1}   \overline{D}_{\bx_p}h(\bx)+ \Delta_{\bx}^{m-2}( \overline{\bx}_p  (\Delta_{\bx}^{2}h(\bx)))\\
& & \cdots\\
&=& 2m \Delta_{\bx}^{m-1}   \overline{D}_{\bx_p}h(\bx)+   \overline{\bx}_p ( \Delta_{\bx}^{m}h(\bx)),
\end{eqnarray*}
where the fourth equality follows from the commutative relationship  $\overline{D}_{\bx_p} \Delta_{\bx}=\Delta_{\bx}\overline{D}_{\bx_p}$.
The proof is complete.
\end{proof}

Now we can  formulate  main result as follows.
\begin{theorem} \label{Almansi-theorem-Main}
Let $q=2m+1\geq3$ be odd and $D\subseteq\mathbb{R}^{p+2}$ be  a domain. If $f:\Omega_D \longrightarrow \mathbb {R}_{p+q} $ is a generalized partial-slice regular function, then there exist two unique    polyharmonic functions of degree $m$ $A$ and $B$ in  $\Omega_D$, symmetric with respect to the real space $\mathbb{R}^{p+1}$, such that
 $$f(\bx)=A(\bx)- \overline{\bx}B(\bx).$$
Conversely, if $A$ and $ B$  are $\mathbb{R}_{p+q}$-valued functions of $C^{1}$ class in $\Omega_D$,  symmetric with respect to the real space $\mathbb{R}^{p+1}$, then $g(x):=A(\bx)- \overline{\bx}B(\bx) \in \mathcal{GS}(\Omega_D)$. Furthermore,  the function $g$ is generalized partial-slice regular   if and only if $A$ and $B$ satisfy the system of equations
 \begin{eqnarray}\label{CR-2}
 \left\{
\begin{array}{ll}
D_{\bx_p} A(\bx')-\sum_{i=0}^{p}e_i\overline{\bx}_p\partial_{x_i}B(\bx')-r\partial_{r}B(\bx')=(p+2)B(\bx'),
\\
 \partial_{r}A(\bx') -   \overline{\bx}_p \partial_{r} B(\bx')+r\overline{D}_{\bx_p} B(\bx')=0.
\end{array}
\right.
\end{eqnarray}
\end{theorem}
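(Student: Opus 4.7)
The plan is to define the Almansi components directly via the spherical value and spherical derivative. Given $f\in\mathcal{GSR}(\Omega_D)$, I would set
\[
B(\bx):=f_s'(\bx),\qquad A(\bx):=f_s^\circ(\bx)+\overline{\bx}_p\, f_s'(\bx).
\]
Using the identity $\overline{\bx}=\overline{\bx}_p-\underline{\bx}_q$ together with the decomposition (\ref{spherical-value-derivative}), a direct check yields $A-\overline{\bx}B=f_s^\circ+\overline{\bx}_p f_s'-(\overline{\bx}_p-\underline{\bx}_q)f_s'=f$. Since $f_s^\circ$ and $f_s'$ depend only on $(\bx_p,r)$, so do $A$ and $B$, which shows that both are symmetric with respect to $\mathbb{R}^{p+1}$.

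The core step is to establish that $A$ and $B$ are polyharmonic of degree $m=(q-1)/2$. For $B=f_s'$, I would apply Theorem \ref{spherical-derivative-value}(i) at $k=m$; since $q=2m+1$, the prefactor $(q-3)(q-5)\cdots(q-2m-1)$ contains the vanishing factor $q-2m-1=0$, so $\Delta_{\bx}^{m}f_s'\equiv0$. For $A$, Lemma \ref{poly-harmonic-lemma} applied with $h=f_s'$, together with the vanishing just obtained, gives
\[
\Delta_{\bx}^{m}(\overline{\bx}_p f_s')=2m\,\Delta_{\bx}^{m-1}\overline{D}_{\bx_p} f_s'.
\]
Theorem \ref{spherical-derivative-value}(iv) then supplies $\overline{D}_{\bx_p} f_s'=-\tfrac{1}{2m}\Delta_{\bx}f_s^\circ$ (since $1-q=-2m$), and therefore $\Delta_{\bx}^{m}(\overline{\bx}_p f_s')=-\Delta_{\bx}^{m}f_s^\circ$. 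Consequently $\Delta_{\bx}^{m}A=\Delta_{\bx}^{m}f_s^\circ+\Delta_{\bx}^{m}(\overline{\bx}_p f_s')=0$. I expect this exact coefficient cancellation, which hinges on the identity $q=2m+1$, to be the main technical obstacle.

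For uniqueness, if $A-\overline{\bx}B=\widetilde{A}-\overline{\bx}\widetilde{B}$ with both pairs symmetric, I would rewrite each side in the form $(A-\overline{\bx}_p B)+\underline{\bx}_q B$ and match it against the canonical decomposition $f=f_s^\circ+\underline{\bx}_q f_s'$; since $A,\widetilde{A},B,\widetilde{B}$ do not involve $\underline{\omega}$, comparing the two parts forces $B=\widetilde{B}=f_s'$ and then $A=\widetilde{A}$.

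For the converse, given symmetric $C^{1}$ functions $A,B$, I would set $G_1:=A-\overline{\bx}_p B$ and $G_2(\bx_p,r):=rB(\bx_p,r)$, so that $g(\bx)=G_1(\bx')+\underline{\omega}G_2(\bx')$. The symmetry of $A,B$ forces their even extension in $r$, so $G_1$ is even and $G_2$ odd in $r$, confirming $g\in\mathcal{GS}(\Omega_D)$. The system (\ref{CR-2}) then emerges from substituting $G_1,G_2$ into the generalized Cauchy--Riemann equations (\ref{C-R}); the only computation requiring attention is $D_{\bx_p}(\overline{\bx}_p B)$, where the product rule yields the scalar contribution $\sum_{i=0}^{p}e_i(\partial_{x_i}\overline{\bx}_p)B=(p+1)B$ in addition to $\sum_{i=0}^{p}e_i\overline{\bx}_p\partial_{x_i}B$. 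Combined with $\partial_r(rB)=B+r\partial_r B$, this produces the $(p+2)B$ on the right-hand side of the first equation in (\ref{CR-2}); the second equation follows at once from $\overline{D}_{\bx_p}G_2=r\overline{D}_{\bx_p}B$ and $\partial_r G_1=\partial_r A-\overline{\bx}_p\partial_r B$.
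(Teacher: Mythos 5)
Your proposal is correct and follows essentially the same route as the paper: the same choice $B=f_s'$, $A=f_s^\circ+\overline{\bx}_p f_s'$, the same use of Theorem \ref{spherical-derivative-value}(i),(iv) and Lemma \ref{poly-harmonic-lemma} for the degree-$m$ polyharmonicity (including the cancellation $2m\,\Delta_{\bx}^{m-1}\overline{D}_{\bx_p}f_s'=-\Delta_{\bx}^{m}f_s^\circ$), the same uniqueness argument via the spherical derivative, and the same converse computation with $G_1=A-\overline{\bx}_p B$, $G_2=rB$ yielding the $(p+1)B+B=(p+2)B$ term.
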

\begin{proof}
In view of (\ref{spherical-value-derivative}), we have the following decomposition
$$f(\bx)=f^{\circ}_{s}(\bx)+ \underline{\bx}_{q} f_{s}^\prime(\bx)=A(\bx)- \overline{\bx}B(\bx),$$
 where the generalized partial-slice  functions $A$ and $B$ given by
 $$A(\bx)=f^{\circ}_{s}(\bx)+\overline{\bx}_p f_{s}^\prime(\bx), \ B(\bx)= f_{s}^\prime(\bx),$$
depend only on the variable $\bx'=(\bx_p,r)$ and are even w.r.t. $ r$. That is to say functions $A$ and $B$ are symmetric with respect to the real space $\mathbb{R}^{p+1}$.

Now let us show $A$ and $B$ are polyharmonic functions of degree $m$  in $\Omega_D$.
By Lemma \ref{poly-harmonic-lemma}, we have
$$\Delta_{\bx}^{m} A(\bx)=\Delta_{\bx}^{m} f^{\circ}_{s}(\bx)+2m \Delta_{\bx}^{m-1}   \overline{D}_{\bx_p}f_{s}^\prime(\bx)+   \overline{\bx}_p  \Delta_{\bx}^{m}f_{s}^\prime(\bx).$$
In view of Theorem \ref{spherical-derivative-value}  (i) and  (iv)
$$ \Delta_{\bx}^{m}f_{s}^\prime(\bx)=0, \quad \Delta_{\bx}  f^{\circ}_{s}(\bx)= -2m   \overline{D}_{\bx_p}f_{s}^\prime(\bx),$$
we get the desired results $\Delta_{\bx}^{m} A(\bx)=\Delta_{\bx}^{m} B(\bx)=0$.

To prove the uniqueness, consider the function
$$f(\bx)=A(\bx)- \overline{\bx}B(\bx), \quad \bx\in \Omega_D,$$
where $A$ and $B$ are   symmetric w.r.t. the real   space $\mathbb{R}^{p+1}$.\\
% i.e.$$A(\bx)= A(\bx'), \ B(\bx)=B(\bx').$$
In particular,
$$A(\bx)= A(\bx_{\diamond}),\ B(\bx)=B(\bx_{\diamond}),\quad \bx=\bx_p+\underline{\bx}_q,\ \bx_{\diamond}=\bx_p-\underline{\bx}_q.$$
Hence, by definition, it holds that
$$f_{s}^\prime(\bx)=\frac{1}{2}\underline{\bx}_{q}^{-1}\big( A(\bx)- \overline{\bx}B(\bx)-(A(\bx_{\diamond})-
\overline{\bx_{\diamond}}B(\bx_{\diamond}) ) \big) =B(\bx),$$
%$$\overline{\bx_{\diamond}} -\overline{\bx}=2\underline{\bx}_{q},$$
and then
$$ A(\bx)= f(\bx)+ \overline{\bx}B(\bx)=f(\bx)+\overline{\bx}f_{s}^\prime(\bx).$$
 Therefore $A$ and $B$ are uniquely determined by $f$.

Conversely, suppose  that $g(x)=A(\bx)- \overline{\bx}B(\bx)$ where $A$ and $ B$  are $\mathbb{R}_{p+q}$-valued functions of $C^{1}$ class in $\Omega_D$,  symmetric with respect to the real space $\mathbb{R}^{p+1}$. Given $\bx_p+r\underline{\omega}   \in \Omega_{D}$ and $\bx'=(\bx_p,r) \in D$, then we can write $g$ as
 $$g(\bx)=G_{1}(\bx')+ \underline{\omega} G_{2}(\bx'),$$
where
$$G_{1}(\bx')=A(\bx)- \overline{\bx}_{p}B(\bx), \ G_{2}(\bx')= r B(\bx).$$
It  follows immediately from the symmetry properties of $A$  and $B$ that $(G_1,G_2)$ is an even-odd pair which induces the generalized partial-slice function $g$. Finally, $(G_1,G_2)$ satisfies the generalized Cauchy-Riemann equations  (\ref{C-R}) if and only if (\ref{CR-2}) holds by direct computations, which finishes the proof.
\end{proof}

From  Theorems   \ref{Almansi-decomposition-harmonic}  and \ref{Almansi-theorem-Main}, one may  generalize  Theorem \ref{Almansi-theorem-Perotti-2} into the setting of  generalized  partial-slice regular functions as follows.
\begin{theorem} \label{Almansi-theorem-Perotti-Xu-2}
 Let $q=2m+1\geq3$ be odd, $\Omega_{D}$ be  a star-like domain in $\mathbb{R}^{p+q+1}$ with centre $0$. If $f$ is  a generalized partial-slice regular function in  $\Omega_D$, then  there exist  unique
 functions    $g_{0},g_{1}, \ldots,g_{m-1}\in Ker (D_{\bx} \Delta_{\bx} )$,   symmetric with respect to the real space $\mathbb{R}^{p+1}$,  such that
  $$f(\bx)=g_{0}(\bx)+|\bx|^{2}g_{1}(\bx)+ \cdots+|\bx|^{2m-2}g_{m-1}(\bx), \quad \bx\in \Omega_D.$$
\end{theorem}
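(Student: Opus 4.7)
The plan is to combine Theorem \ref{Almansi-theorem-Main} with the classical Almansi decomposition (Theorem \ref{Almansi-decomposition-harmonic}) applied separately to the two polyharmonic pieces. First, applying Theorem \ref{Almansi-theorem-Main} to $f$ yields
$$f(\bx) = A(\bx) - \overline{\bx}\, B(\bx),$$
where $A$ and $B$ are unique polyharmonic functions of degree $m = (q-1)/2$ on $\Omega_D$, symmetric with respect to $\mathbb{R}^{p+1}$. Since $\Omega_D$ is star-like with centre $0$, applying Theorem \ref{Almansi-decomposition-harmonic} componentwise to the $\mathbb{R}_{p+q}$-valued functions $A$ and $B$ produces unique harmonic functions $a_0,\dots,a_{m-1}$ and $b_0,\dots,b_{m-1}$ in $\Omega_D$ such that
$$A(\bx)=\sum_{i=0}^{m-1}|\bx|^{2i}a_i(\bx),\qquad B(\bx)=\sum_{i=0}^{m-1}|\bx|^{2i}b_i(\bx).$$
Setting $g_i(\bx):=a_i(\bx)-\overline{\bx}\, b_i(\bx)$ then gives the desired sum representation $f=\sum_{i=0}^{m-1}|\bx|^{2i}g_i$.

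Next I would verify that $g_i \in \ker(D_{\bx}\Delta_{\bx})$. The two key ingredients are the Leibniz-type identity $\Delta_{\bx}(\overline{\bx}\, h)=\overline{\bx}\,\Delta_{\bx} h+2\overline{D}_{\bx} h$ (following from $\overline{\bx}$ being affine in $\bx$) and the Cauchy--Riemann factorization $D_{\bx}\overline{D}_{\bx}=\Delta_{\bx}$. Since $a_i, b_i$ are harmonic, these give $\Delta_{\bx} g_i=-2\overline{D}_{\bx}b_i$ and hence $D_{\bx}\Delta_{\bx}g_i=-2\Delta_{\bx}b_i=0$. Uniqueness of $\{g_i\}$ then follows by concatenating the uniqueness clauses of Theorems \ref{Almansi-theorem-Main} and \ref{Almansi-decomposition-harmonic}.

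The step I expect to demand the most care is the symmetry of each $g_i$ with respect to $\mathbb{R}^{p+1}$. Symmetry of the harmonic components $a_i, b_i$ is clean: uniqueness of the classical Almansi decomposition applied to $A(T\cdot)=A(\cdot)$ (for orthogonal $T$ acting only on $\underline{\bx}_q$), together with the invariance $|T\bx|^2=|\bx|^2$, forces $a_i(T\bx)=a_i(\bx)$, and similarly for $b_i$. The subtlety is that $g_i=a_i-\overline{\bx}\,b_i$ is not strictly pointwise invariant under such $T$, since $\overline{\bx}=\overline{\bx}_p-\underline{\bx}_q$ reintroduces explicit $\underline{\bx}_q$ dependence. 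The natural reading---mirroring Perotti's ``zonal with pole $1$'' in Theorem \ref{Almansi-theorem-Perotti-2}---is through the generalized partial-slice structure: rewriting $g_i=(a_i-\overline{\bx}_p b_i)+\underline{\bx}_q b_i$ displays $g_i\in\mathcal{GS}(\Omega_D)$ with components $G_1=a_i-\overline{\bx}_p b_i$ (even in $r$) and $G_2=r b_i$ (odd in $r$), each depending only on $(\bx_p,r)$, which is the sense of symmetry compatible with the slice framework.
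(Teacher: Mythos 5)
Your proposal follows essentially the same route as the paper's proof: decompose $f=A-\overline{\bx}B$ via Theorem \ref{Almansi-theorem-Main}, apply the classical Almansi decomposition (Theorem \ref{Almansi-decomposition-harmonic}) to $A$ and $B$, set $g_i=a_i-\overline{\bx}b_i$, check $D_{\bx}\Delta_{\bx}g_i=-2D_{\bx}\overline{D}_{\bx}b_i=-2\Delta_{\bx}b_i=0$, and obtain symmetry of the harmonic components from uniqueness together with $|T\bx|=|\bx|$. Your closing remark on the sense in which $g_i$ itself is ``symmetric'' (via its generalized partial-slice components rather than pointwise invariance under $T$) is a careful reading of a point the paper passes over silently, and does not change the argument.
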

\begin{proof}We shall follow   the argument of Theorem \ref{Almansi-theorem-Perotti-2} given in  \cite{Perotti-21}.
By Theorem \ref{Almansi-theorem-Main},   there exist two unique    polyharmonic functions of degree $m$ $A$ and $B$ in  $\Omega_D$, symmetric with respect to the real space $\mathbb{R}^{p+1}$, such that
 $$f(\bx)=A(\bx)- \overline{\bx}B(\bx).$$
By Theorem \ref{Almansi-decomposition-harmonic},  there exist unique    $\mathbb{R}_{p+q}$-valued harmonic functions
$u_0,\ldots,u_{m-1}$ and $v_0,\ldots,v_{m-1}$  in  $\Omega_D$ such that
\begin{equation}\label{A}
A(\bx)=\sum_{k=0}^{m-1}|\bx|^{k}u_k(\bx), \quad B(\bx)=\sum_{k=0}^{m-1}|\bx|^{k}v_k(\bx).\end{equation}
Hence,
 $$f(\bx)=\sum_{k=0}^{m-1}|\bx|^{k}g_k(\bx), \quad g_k(\bx)=u_k(\bx)- \overline{\bx}v_k(\bx),$$
 where  each function $g_k$ is unique. Furthermore,   each function $g_k$ is symmetric with respect to the real space $\mathbb{R}^{p+1}$  as the functions $A$ and $B$. To see this, consider the orthogonal
transformation $ T=\begin{pmatrix} E_{p+1} \ \ \ \   \\ \ \ \ \ \  \ \  T_{q}\end{pmatrix},$
with $E_{p+1}$ being the unit matrix in $\mathbb{R}^{p+1}$ and $ T_{q}\in O(q)$, then we get
$$A \circ T(\bx)=\sum_{k=0}^{m-1}|T\bx|^{k}u_k(T(\bx))=\sum_{k=0}^{m-1}|\bx|^{k}u_k(T(\bx)).$$
In view of the condition  $A \circ T=A$ and  the uniqueness of the Almansi decomposition in (\ref{A}), we obtain $u_k\circ  T=u_k$ for each $k$. The same holds for $v_k$.  Finally, let us  prove  $D_{\bx} \Delta_{\bx}g_k(\bx)=0$  as
$$D_{\bx} \Delta_{\bx}g_k(\bx)=-D_{\bx} \Delta_{\bx} (\overline{\bx}v_k(\bx) )=-2D_{\bx} \overline{D}_{\bx} v_k(\bx) =
-2\Delta_{\bx}v_k(\bx)=0.$$
The proof is complete.
\end{proof}

%%%%%%%%%%%%%%%%%%%%%%%%%%%%%%%%%%%

\textbf{Declarations}
\\
\textbf{Author contributions}
All authors have contributed equally to all aspects of this manuscript and have reviewed its final draft.
\\
\textbf{Conflict of interest}
There is no financial or non-financial interests that are directly or indirectly related to the work submitted for publication.
\\
\textbf{Data availability}
Data sharing is not applicable to this article as no datasets were generated  during the current study.
\bigskip
\\
\textbf{Acknowledgements}
 The authors would like to thank Professor Irene Sabadini for her useful comments.

%%%%%%%%%%%%%%%       Reference   %%%%%%%%%%%%%%%%%%%%%%%

%%%%%%%%%%%%%%%%%%%%%%%%%%%%%%%%%%

%%%%%%%%%%%%%%%

%%%%%%%%%%%%%%%%%%%%%%%%%%%%%%%%%%%%%%%%%%%%%%%%%%%%%%%%%

\vskip 10mm
\end{document}